\documentclass{scrartcl}
\usepackage[
color, 
noindent, 
english, 
enumstyle, 
theorem, 
operator, 
alphabet, 
cmd, 
arraytweak,
numberbysection,
arraytweak
]
{jgrygierek}
\usepackage[draft=false]{scrlayer-scrpage} 
\usepackage[
backend=biber, 
maxnames=42,
style=alphabetic,
citestyle=alphabetic,
doi=false,
url=false,
isbn=false,
giveninits=true
]{biblatex} 
\addbibresource{Gry-PFRGG.bib}
\patchcmd{\thebibliography}{\chapter*}{\section*}{}{}
\allowdisplaybreaks
\sloppy
\date{\today}
\author{\texorpdfstring{Jens Grygierek\footnotemark[1]}{Jens Grygierek}}
\title{Poisson fluctuations for edge counts in high-dimensional random geometric graphs} 
\makeatletter
\hypersetup{
  pdftitle    = {\@title},
  pdfsubject  = {Mathematics/Probability, 60D05, 60F05},
  pdfauthor   = {\@author},
  pdfkeywords = {\@title, \@author, Poisson limit theorem, edge counting statistic, high dimensional random geometric graph, Poisson point process, second-order Poincar\'{e} inequality, stochastic geometry, Mehler's formula, Stein's method, Malliavin calculus, phase transition}
}
\makeatother

\begin{document}
\renewcommand{\thefootnote}{\fnsymbol{footnote}}

\maketitle
	
\footnotetext[1]{Institute of Mathematics, Osnabr\"uck University, Germany. Email: jens.grygierek@uni-osnabrueck.de}

\begin{abstract}
	\noindent We prove a Poisson limit theorem in the total variation distance of functionals of a general Poisson point process using the Malliavin-Stein method.
	Our estimates only involve first and second order difference operators and are closely related to the corresponding bounds for the normal approximation in the Wasserstein distance by Last, Peccati and Schulte, see \cite{LastPeccatiSchulte2016}.
	As an application of this Poisson limit theorem, we consider a stationary Poisson point process in $\RR^d$ and connect any two points whenever their distance is less than or equal to a prescribed distance parameter.
	This construction gives rise to the well known random geometric graph.
	The number of edges of this graph is counted that have a midpoint in the $d$-dimensional unit ball.
	A quantitative Poisson limit theorem for this counting statistic is derived, as the space dimension $d$ and the intensity of the Poisson point process tend to infinity simultaneously, extending our previous work, \cite{GrygierekThaele2016} where we derived a central limit theorem, showing that the phase transition phenomenon holds also in the high-dimensional set-up.
	\bigskip
	\\
	\textbf{Keywords}. {Poisson limit theorem, edge counting statistic, high dimensional random geometric graph, Poisson point process, second-order Poincar\'{e} inequality, stochastic geometry, Mehler's formula, Stein's method, Malliavin calculus, phase transition}\\
	\textbf{MSC (2010)}. 60D05, 60F05
\end{abstract}

\section{Introduction and main results}

Fix an intensity $\lambda \in (0,\infty)$ and a distance parameter $\delta \in (0,\infty)$ and let $\eta_\lambda$ be a stationary Poisson point process in $\RR^d$, $d \in \NNN$ with intensity $\lambda$. The points of $\eta_\lambda$ are taken as the vertices of a random graph and we connect any two distinct vertices by an edge provided that their distance is less than or equal to $\delta$. By this construction the random geometric graph in $\RR^d$ arises.

This paper is a direct continuation of \cite{GrygierekThaele2016}, where we have derived a quantitative central limit theorem for the number of edges that have their midpoint in the $d$-dimensional unit ball $\BB^d$ as the space dimension $d$ and the intensity $\lambda$ tend to infinity simultaneously such that the expectation of the considered edge counting statistic tends to infinity. 
In this paper we derive the corresponding Poisson limit theorem in the case that the expectation tends to a positive but finite constant by first proving a general Poisson limit theorem for Poisson functionals using the Malliavin-Stein method, that comes in the taste of the remarkable central limit theorem \cite[Theorem 1.1]{LastPeccatiSchulte2016}.

\subsection{\texorpdfstring{Poisson approximation for $\NNN$-valued Poisson functionals}{Poisson approximation for N-valued Poisson functionals}}

We first rephrase a version of the main result from \cite{LastPeccatiSchulte2016}, a so-called second order Poincar\'{e} inequality for Poisson functionals, see also \cite[Theorem 2.13]{LastPenrose2018}, that only involves moments of first and second order difference operators.

\begin{theorem}\label{thm:gauss-limit}
	Let $F \in \dom{D}$ be a Poisson functional such that $\E*{F} = 0$ and $\V*{F} = 1$.
	Define
	\begin{align*}
		\gamma_1(F) & := \int\limits_{\XX^3} \! 
				\rbras*{\E*{(D_{x_1,x_3}^2F)^4}\E*{(D_{x_2,x_3}^2 F)^4} \E*{(D_{x_1}F)^4}\E*{(D_{x_2}F)^4}}^{\frac{1}{4}} 
				\mu^3(\id (x_1, x_2, x_3))\\
		\gamma_2(F) & := \int\limits_{\XX^3} \! 
				\rbras*{\E*{(D_{x_1,x_3}^2 F)^4}\E*{(D_{x_2,x_3}^2 F)^4}}^{\frac{1}{2}}
				\mu^3(\id (x_1, x_2, x_3))\\
		\gamma_{3,N}(F) & := \int\limits_{\XX}
				\Eabs*{D_xF}^3
				\mu(\id x)
	\end{align*}
	and let $Z$ be a standard Gaussian random variable,
	then
	\begin{align*}
		\dist_W(F,Z) \leq 2 \sqrt{\gamma_1(F)} + \sqrt{\gamma_2(F)} + \gamma_{3,N}(F),
	\end{align*}
	where $\dist_W$ denotes the Wasserstein-distance, see Definition \ref{def:wasserstein}.
\end{theorem}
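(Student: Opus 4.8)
The plan is to run the Malliavin--Stein machinery for the normal approximation of Poisson functionals, following the architecture of \cite[Theorem 1.1]{LastPeccatiSchulte2016}; indeed the present statement is obtained from that result after one further Cauchy--Schwarz step, so my task is really to reconstruct the three error terms $\gamma_1,\gamma_2,\gamma_{3,N}$. I would begin from the Stein equation for the Gaussian law: to each $1$-Lipschitz test function $h$ one associates the bounded solution $g=g_h$ of $g'(t)-t\,g(t)=h(t)-\E*{h(Z)}$, which satisfies $\lVert g'\rVert_\infty\le\sqrt{2/\pi}$ and $\lVert g''\rVert_\infty\le 2$. By the dual (Kantorovich--Rubinstein) description of the Wasserstein distance in Definition \ref{def:wasserstein}, this reduces the problem to the uniform bound
\begin{align*}
	\dist_W(F,Z)\ \le\ \sup_{h}\,\bigl\lvert\E*{g_h'(F)}-\E*{F\,g_h(F)}\bigr\rvert ,
\end{align*}
so it suffices to estimate $\E*{g'(F)-F\,g(F)}$ uniformly over admissible $g$.

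Next I would feed in the Malliavin calculus. Writing $L$ for the Ornstein--Uhlenbeck generator and $L^{-1}$ for its pseudo-inverse on the orthogonal complement of the constants, the identity $F=-L L^{-1}F$ together with the integration-by-parts (divergence) formula gives
\begin{align*}
	\E*{F\,g(F)}=\int_{\XX}\E*{\bigl(D_x g(F)\bigr)\bigl(-D_x L^{-1}F\bigr)}\,\mu(\id x).
\end{align*}
Because the add-one-cost operator $D_x$ is a difference rather than a derivative, it violates the chain rule, and the decisive technical device is the exact first-order Taylor expansion
\begin{align*}
	D_x g(F)=g'(F)\,D_xF+R_x,\qquad \lvert R_x\rvert\le\tfrac12\lVert g''\rVert_\infty\,(D_xF)^2 .
\end{align*}
Splitting the integral along this expansion produces a leading term carrying $g'(F)$ and a quadratic remainder carrying $R_x$.

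The crux is the leading term, which equals $\E*{g'(F)\,\langle DF,-DL^{-1}F\rangle}$ with $\langle DF,-DL^{-1}F\rangle=\int_\XX D_xF\,(-D_x L^{-1}F)\,\mu(\id x)$. Since $\E*{\langle DF,-DL^{-1}F\rangle}=\V*{F}=1$, subtracting the Stein term and applying Jensen's inequality yields
\begin{align*}
	\bigl\lvert\E*{g'(F)}-\E*{g'(F)\,\langle DF,-DL^{-1}F\rangle}\bigr\rvert\ \le\ \lVert g'\rVert_\infty\,\sqrt{\V*{\langle DF,-DL^{-1}F\rangle}} .
\end{align*}
I expect the main obstacle to be the estimate of this variance. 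The plan is to apply the Poincar\'e inequality to the random variable $\langle DF,-DL^{-1}F\rangle$, expand the resulting difference $D_y\langle DF,-DL^{-1}F\rangle$ by the product rule $D_y(GH)=(D_yG)H+G(D_yH)+(D_yG)(D_yH)$ (so that second differences $D^2_{x,y}$ appear), and control the pieces by Cauchy--Schwarz. After integrating over the third variable this organizes into one term of the type $\int_{\XX^3}\sqrt{\E*{(D^2_{x_1,x_3}F)^2(D^2_{x_2,x_3}F)^2}}\,\sqrt{\E*{(D_{x_1}F)^2(D_{x_2}F)^2}}$ and one of the type $\int_{\XX^3}\E*{(D^2_{x_1,x_3}F)^2(D^2_{x_2,x_3}F)^2}$; two structurally identical copies of the former (accounting for the factor $2$) and one of the latter arise from Minkowski's inequality on the Poincar\'e bound. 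A final Cauchy--Schwarz on each expectation, $\E*{A^2B^2}\le\sqrt{\E*{A^4}\E*{B^4}}$, replaces these second moments by the fourth-moment integrands, giving exactly $2\sqrt{\gamma_1(F)}+\sqrt{\gamma_2(F)}$.

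It remains to dispose of the remainder. Using $\lVert g''\rVert_\infty\le2$ I would bound it by $\int_\XX\E*{(D_xF)^2\lvert D_x L^{-1}F\rvert}\,\mu(\id x)$, then apply H\"older's inequality with exponents $3/2$ and $3$ together with the $L^3$-contractivity of $-D L^{-1}$ --- which follows from the Mehler representation $-D_x L^{-1}F=\int_0^\infty e^{-t}P_t D_xF\,\id t$ and Jensen's inequality, $P_t$ being a conditional expectation. This gives $\E*{\lvert D_x L^{-1}F\rvert^3}\le\E*{\lvert D_xF\rvert^3}$ and hence $\int_\XX\E*{(D_xF)^2\lvert D_x L^{-1}F\rvert}\,\mu(\id x)\le\int_\XX\E*{\lvert D_xF\rvert^3}\,\mu(\id x)=\gamma_{3,N}(F)$. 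Collecting the three contributions closes the bound.
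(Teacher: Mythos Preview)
The paper does not supply its own proof of this statement: it is introduced explicitly as a rephrased version of \cite[Theorem~1.1]{LastPeccatiSchulte2016} (see also \cite[Theorem~2.13]{LastPenrose2018}) and quoted only to set the stage for the Poisson-approximation analogue, Theorem~\ref{thm:poisson-limit}, which is the paper's actual contribution. There is therefore nothing in the paper itself to compare your argument against.

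That said, your sketch is a faithful outline of the Last--Peccati--Schulte proof: Stein's equation for the Gaussian, the integration-by-parts identity $\E*{F\,g(F)}=\E*{\langle Dg(F),-DL^{-1}F\rangle_{L^2(\mu)}}$, the Taylor remainder accounting for the failure of the chain rule for $D_xg(F)$, the variance bound on $\langle DF,-DL^{-1}F\rangle_{L^2(\mu)}$ (which the present paper records separately as Theorem~\ref{thm:malliavin:var}, with exactly the constant $4\gamma_1+\gamma_2$ you need), and the Mehler-based contractivity estimate of Lemma~\ref{lem:malliavin:moments} to eliminate $L^{-1}$ from the cubic remainder. Combining $\lVert g''\rVert_\infty\le 2$ with your H\"older step and $\sqrt{4\gamma_1+\gamma_2}\le 2\sqrt{\gamma_1}+\sqrt{\gamma_2}$ indeed gives the stated bound.
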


Replacing the third term in the approximation bound with
\begin{align*}
	\gamma_{3,P}(F) & := \int\limits_{\XX} \rbras*{\Eabs*{D_xF(D_xF-1)}^2}^\frac{1}{2} \rbras*{\Eabs*{D_xF}^2}^\frac{1}{2} \mu(\id x),
\end{align*}
we can formulate the analogue of Theorem \ref{thm:gauss-limit} for Poisson approximation, which is our first main result and will be used later to derive Theorem \ref{thm:edge-count-poisson-limit}.

\begin{theorem}[Poisson Approximation]\label{thm:poisson-limit}
	Let $\eta$ be a Poisson point process on $\XX$ with $\sigma$-finite non-atomic intensity measure $\mu$ and let $F$ be an $\NNN$-valued Poisson functional satisfying $F \in \dom{D}$.
	Further, let $\cP(\theta)$ be a Poisson distributed random variable with parameter $\theta > 0$.
	Then
	\begin{align*}
		\dist_{TV}\rbras*{F,\cP(\theta)} \leq \frac{1-e^{-\theta}}{\theta} \rbras*{2\sqrt{\gamma_1(F)} + \sqrt{\gamma_2(F)} + \frac{\gamma_{3,P}(F)}{\theta} + \abs*{\E*{F} - \theta} + \abs*{\V*{F} - \theta}},
	\end{align*}
	where $\dist_{TV}$ denotes the total variation distance, see Definition \ref{def:total-variation}.
\end{theorem}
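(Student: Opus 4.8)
The plan is to run the Chen--Stein method for the Poisson distribution and to convert the resulting Stein expression into first- and second-order difference operators through the Malliavin covariance identity, in close analogy with the argument behind Theorem~\ref{thm:gauss-limit}. For a set $A \subseteq \NNN$ let $g = g_A$ be the bounded solution of the Poisson--Stein equation $\theta\,g(k+1) - k\,g(k) = \mathbf 1_A(k) - \cP(\theta)(A)$ with $g(0)=0$, so that the representation of the total variation distance gives $\dist_{TV}\rbras*{F,\cP(\theta)} = \sup_A |\E*{\theta\,g(F+1) - F\,g(F)}|$. Writing $\Delta g(k) := g(k+1) - g(k)$ and $\Delta^2 g(k) := \Delta g(k+1) - \Delta g(k)$, I would first record the Stein factor bounds, uniform in $A$,
\[
\|g\|_\infty \le \frac{1-e^{-\theta}}{\theta}, \qquad \|\Delta g\|_\infty \le \frac{1-e^{-\theta}}{\theta}, \qquad \|\Delta^2 g\|_\infty \le \frac{2\,(1-e^{-\theta})}{\theta^2};
\]
the extra factor $\theta^{-1}$ in the second-difference bound is exactly what will calibrate the $\gamma_{3,P}(F)/\theta$ term.

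Next I would use $\theta\,g(k+1) - k\,g(k) = \theta\,\Delta g(k) - (k-\theta)\,g(k)$ and the splitting $F - \theta = (F - \E*{F}) + (\E*{F}-\theta)$ to obtain
\[
\E*{\theta g(F+1) - F g(F)} = \theta\,\E*{\Delta g(F)} - \mathrm{Cov}\rbras*{F, g(F)} - \rbras*{\E*{F}-\theta}\,\E*{g(F)}.
\]
The covariance is then treated with the Malliavin identity $\mathrm{Cov}(F, g(F)) = \E*{\int_\XX D_x g(F)\,(-D_x L^{-1}F)\,\mu(\id x)}$, which is legitimate since $g$ is bounded so that $g(F)\in\dom{D}$, combined with the discrete expansion $D_x g(F) = g(F + D_x F) - g(F) = D_x F\,\Delta g(F) + R_x$. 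The structural point, which is where the assumption that $F$ is $\NNN$-valued enters, is that $D_x F$ takes integer values and therefore
\[
|R_x| \le \frac12\,\|\Delta^2 g\|_\infty\, D_x F\,(D_x F - 1), \qquad D_x F\,(D_x F - 1) \ge 0,
\]
with equality to zero precisely when $D_x F \in \{0,1\}$; this is the mechanism that replaces the Gaussian third-moment quantity $\gamma_{3,N}$ by $\gamma_{3,P}$. Setting $T := \int_\XX D_x F\,(-D_x L^{-1}F)\,\mu(\id x)$ and using $\E*{T} = \V*{F}$, I arrive at
\begin{align*}
	\E*{\theta g(F+1) - F g(F)}
		&= \rbras*{\theta - \V*{F}}\,\E*{\Delta g(F)} - \mathrm{Cov}\rbras*{\Delta g(F), T}\\
		&\quad - \E*{\int_\XX R_x\,(-D_x L^{-1}F)\,\mu(\id x)} - \rbras*{\E*{F}-\theta}\,\E*{g(F)}.
\end{align*}

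It then remains to bound the four terms and take the supremum over $A$. The first and last are immediate, giving $\|\Delta g\|_\infty\,|\V*{F}-\theta|$ and $\|g\|_\infty\,|\E*{F}-\theta|$. For the covariance term I would write $\mathrm{Cov}(\Delta g(F), T) = \E*{\Delta g(F)\,(T - \V*{F})}$ and bound it by $\|\Delta g\|_\infty\,\sqrt{\V*{T}}$, invoking the variance estimate $\sqrt{\V*{T}} \le 2\sqrt{\gamma_1(F)} + \sqrt{\gamma_2(F)}$ that already underlies Theorem~\ref{thm:gauss-limit}. For the remainder term, Cauchy--Schwarz together with the pointwise bound on $R_x$, the second-order Stein factor, and the contraction inequality $\E*{(D_x L^{-1}F)^2} \le \E*{(D_x F)^2}$ coming from Mehler's formula yield a bound by $\frac{1-e^{-\theta}}{\theta^2}\,\gamma_{3,P}(F)$. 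Factoring out $\frac{1-e^{-\theta}}{\theta}$ then reproduces the asserted inequality verbatim. The main obstacle is the part that genuinely departs from the Gaussian setting: establishing the sharp second-order Stein factor $\|\Delta^2 g\|_\infty \le 2(1-e^{-\theta})/\theta^2$, whose $\theta^{-2}$ decay is what correctly weights $\gamma_{3,P}(F)/\theta$, and controlling the discrete remainder $R_x$ uniformly over the a priori unbounded integer increments $D_x F$. By contrast, the estimate $\sqrt{\V*{T}} \le 2\sqrt{\gamma_1(F)} + \sqrt{\gamma_2(F)}$ is inherited from the second-order Poincar\'e machinery behind Theorem~\ref{thm:gauss-limit} and requires no new ideas.
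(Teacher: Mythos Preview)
Your strategy is sound and ultimately lands on the same two estimates the paper uses, but you are doing considerably more work than the paper does. The paper does \emph{not} start from the Chen--Stein equation; it takes Peccati's Malliavin--Stein bound for Poisson approximation (stated here as Theorem~\ref{thm:poisson-limit:malliavin-bound}) as a ready-made input. That bound already contains the two terms
\[
\Eabs*{\theta-\skp{DF}{-DL^{-1}F}_{L^2(\mu)}}
\qquad\text{and}\qquad
\dsE\!\int_{\XX}\abs{D_xF(D_xF-1)}\,\abs{D_xL^{-1}F}\,\mu(\id x),
\]
with the correct prefactors $(1-e^{-\theta})/\theta$ and $(1-e^{-\theta})/\theta^2$. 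The paper's entire proof then consists of (i) inserting $\V F$ by the triangle inequality and applying Theorem~\ref{thm:malliavin:var} to the first term, and (ii) Fubini, Cauchy--Schwarz and the Mehler contraction of Lemma~\ref{lem:malliavin:moments} to the second term. These are precisely your ``variance of $T$'' step and your ``remainder'' step, so the core new content coincides.

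What you do differently is rebuild Theorem~\ref{thm:poisson-limit:malliavin-bound} from scratch: the Stein solution $g_A$, the decomposition of $\E[\theta g(F{+}1)-Fg(F)]$, and the discrete Taylor expansion $D_xg(F)=D_xF\,\Delta g(F)+R_x$ with the integer-increment bound $|R_x|\le\tfrac12\|\Delta^2 g\|_\infty\,D_xF(D_xF-1)$. This is a legitimate route and makes the argument self-contained, but it forces you to justify the Stein factors you list. The first-difference bound $\|\Delta g\|_\infty\le(1-e^{-\theta})/\theta$ is standard; however, $\|g\|_\infty\le(1-e^{-\theta})/\theta$ is \emph{not} the classical bound (Barbour--Holst--Janson give $\|g\|_\infty\le\min(1,\theta^{-1/2})$, which is weaker for large $\theta$), and the second-difference estimate $\|\Delta^2 g\|_\infty\le 2(1-e^{-\theta})/\theta^2$ that you flag as the ``main obstacle'' requires a separate argument. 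The paper sidesteps both issues entirely by quoting Peccati's result, where the $|\E F-\theta|$ and $|D_xF(D_xF-1)|$ terms already appear with the stated constants. So your plan is correct in structure, but to match the paper's constants you either need to supply those sharper Stein bounds or, more simply, invoke Theorem~\ref{thm:poisson-limit:malliavin-bound} as the paper does and skip the Chen--Stein derivation.
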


Note that $\gamma_1(F)$ and $\gamma_2(F)$ were also used before in the central limit theorem, which will be useful in the proof of our second main result, since it allows us to reuse some of the calculations we did in the previous work \cite{GrygierekThaele2016}.

\subsection{Poisson fluctuations for edge counts in high-dimensional random geometric graphs}

Let $\eta_d$ be a stationary Poisson point process on $\RR^d$ with dimension-dependent intensity $\lambda_d \in (0,\infty)$, i.e. the intensity measure is given by $\mu_d = \lambda_d \Lambda_d$, where $\Lambda_d$ denotes the $d$-dimensional Lebesgue measure.
We choose a dimension-dependent distance parameter $\delta_d$ with $\delta_d \rightarrow 0$ for $d \rightarrow \infty$, namely we take 
\begin{align*}
	\delta_d = \frac{1}{d},
\end{align*}
which implies that $\delta_d \in (0,1)$ for all $d \geq 2$. The motivation for our choice is explained in Remark \ref{rem:choice-of-delta} below, where we also give the precise conditions for $\delta_d$ to allow for more general choices. We notice that $\delta_d \rightarrow 0$. Finally we choose the dimension-dependent intensity $\lambda_d$ such that $\lambda_d \rightarrow \infty$ for $d \rightarrow \infty$.

Let $\cE(\lambda_d,\delta_d,d)$ denote the number of edges of the random geometric graph that have their midpoint in the $d$-dimensional unit ball $\BB^d$, that is the edge-counting statistic given by
\begin{align*}
	\cE(\lambda_d,\delta_d,d) := \frac{1}{2} \sum\limits_{(y_1,y_2) \in (\eta_{\lambda_d})^2_{\neq}} \1\cbras*{\norm{y_1 - y_2} \leq \delta_d, \frac{y_1+y_2}{2} \in \BB^d}.
\end{align*}
To simplify our notation we shall use the abbreviation $\cE_d$ for $\cE(\lambda_d,\delta_d,d)$. The expectation and the variance of $\cE_d$ was already derived in our previous work \cite[eq. 4, eq. 5, Lemma 7]{GrygierekThaele2016}, namely:
\begin{align*}
	\E*{\cE_d} = \frac{1}{2} \kappa_d^2 \lambda_d^2 \delta_d^d
\end{align*}
and
\begin{align*}
	\frac{1}{2} \kappa_d^2 \lambda_d^2 \delta_d^d + \rbras*{1 - \frac{\delta_d}{2}}^d \kappa_d^3 \lambda_d^3 \delta_d^{2d} \leq \V*{\cE_d} \leq \frac{1}{2} \kappa_d^2 \lambda_d^2 \delta_d^d + \rbras*{1 + \frac{\delta_d}{2}}^d \kappa_d^3 \lambda_d^3 \delta_d^{2d}.
\end{align*}

Here and below, $\kappa_d := \Lambda_d(\BB^d)$ denotes the volume of the $d$-dimensional unit ball. Note that the exponential decay of $\kappa_d$ behaves like $\frac{1}{\sqrt{\pi d}}\rbras*{\frac{2\pi e}{d}}^{\frac{d}{2}}$, as $d \rightarrow \infty$, according to Stirling's formula.

\medskip

We investigate the asymptotic distributional behavior of $\cE_d$ as $\delta_d \rightarrow 0$ and the intensity $\lambda_d$ as well as the space dimension $d$ tend to infinity simultaneously. This set-up is opposed to the most of the existing literature in which the focus lies on random geometric graphs in $\RR^d$ with some fixed space dimension $d$, see \cite{BubeckDingEldanRacz2016} and \cite{DevroyeGyorgyLugosiUdina2011} for notable exceptions, where, however, questions concerning the high-dimensional fluctuations are not touched.

The asymptotic behavior of $\cE_d$ depends on how fast the sequence $(\lambda_d)_{d \in \NNN}$ increases as $d \rightarrow \infty$.
This phenomenon is quite common for asymptotic results related to edge counts (or more generally subgraph counts) and component counts.
In particular, here, one has to distinguishes the following phases, determined by the limit of the expectation $\E*{\cE_d}$:

\begin{align}
	\label{eq:phase:1}
	\lim\limits_{d \rightarrow \infty} \frac{1}{2}\kappa_d^2 \lambda_d^2 \delta_d^d & = \infty,\\
	\label{eq:phase:2}
	\lim\limits_{d \rightarrow \infty} \frac{1}{2}\kappa_d^2 \lambda_d^2 \delta_d^d & = \theta \in (0,\infty),\\
	\label{eq:phase:3}
	\lim\limits_{d \rightarrow \infty} \frac{1}{2}\kappa_d^2 \lambda_d^2 \delta_d^d & = 0,
\end{align}

\begin{remark}
	If the expectation tends to infinity \eqref{eq:phase:1} the edge-counting statistic satisfies a central limit theorem, see \cite[Theorem 1]{GrygierekThaele2016}.
\end{remark}

In this paper, we obtain a Poisson limit theorem for a finite non-zero limit \eqref{eq:phase:2} showing that the phase-transition phenomenon for the edge-counting statistic holds also in the high-dimensional set-up:

\begin{theorem}\label{thm:edge-count-poisson-limit}
	Assume $\frac{1}{2}\kappa_d^2 \lambda_d^2 \delta_d^d \rightarrow \theta \in (0,\infty)$ for $d \rightarrow \infty$ and let $\cP(\theta)$ be a Poisson distributed random variable with parameter $\theta$.
	Then one can find absolute constants $\bC_1, \bC_2,\bD \in (0,\infty)$ such that
	\begin{align*}
		\dist_{TV}\rbras*{\cE_d,\cP(\theta)} \leq \bC_1 (\kappa_d \lambda_d \delta_d^d)^\frac{1}{2} + \bC_2 \abs*{\frac{1}{2}\kappa_d^2 \lambda_d^2 \delta_d^d - \theta},
	\end{align*}
	whenever $d \geq \bD$. In particular, one has that
	\begin{align*}
		\cE_d \os{D}{\longrightarrow} \cP(\theta), \quad \text{as} \quad d \rightarrow \infty.
	\end{align*}
\end{theorem}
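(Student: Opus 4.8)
The plan is to apply the general Poisson approximation bound of Theorem~\ref{thm:poisson-limit} directly to the $\NNN$-valued functional $F = \cE_d$ with the given parameter $\theta$, and then to estimate each of the five resulting terms in terms of the single small quantity $q_d := \kappa_d\lambda_d\delta_d^d$, which tends to $0$ under the hypothesis $\frac12\kappa_d^2\lambda_d^2\delta_d^d\to\theta$ (indeed $q_d^2 \approx 2\theta\,\delta_d^d\to 0$). The starting observation is that the difference operators of $\cE_d$ have a transparent form. Writing $A_x := \cbras*{y\in\RR^d : \norm{x-y}\leq\delta_d,\ \tfrac{x+y}{2}\in\BB^d}$ for the set of admissible neighbours of $x$, a direct computation gives $D_x\cE_d = \eta_d(A_x)$ and, crucially, the second-order operator $D^2_{x_1,x_2}\cE_d = \1\cbras*{x_2\in A_{x_1}}$ is a deterministic indicator. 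Since $\cE_d$ is a bounded-order U-statistic supported on the bounded region $(1+\tfrac{\delta_d}{2})\BB^d$ it lies in $\dom{D}$, and for each fixed $x$ the variable $D_x\cE_d = \eta_d(A_x)$ is exactly $\cP(p_x)$-distributed with $p_x := \mu_d(A_x) = \lambda_d\Lambda_d(A_x)$, so that all its (factorial) moments are explicit. Uniformly in $x$ one has $p_x\leq\lambda_d\kappa_d\delta_d^d = q_d$, and the admissible region $\cbras{A_x\neq\emptyset}$ has $\mu_d$-mass of order $\lambda_d\kappa_d$, i.e. of order $\theta/q_d$.

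For $\gamma_1(\cE_d)$ and $\gamma_2(\cE_d)$ I would reuse the central-limit computations. Because Theorem~\ref{thm:poisson-limit} is applied to the \emph{unnormalised} $\cE_d$, these coincide with $(\V*{\cE_d})^2$ times the normalised quantities already evaluated in \cite{GrygierekThaele2016}, and $(\V*{\cE_d})^2\to\theta^2$. Concretely, using $\E*{(D^2_{x_1,x_3}\cE_d)^4} = \1\cbras*{x_3\in A_{x_1}}$ and $\E*{(D_{x_1}\cE_d)^4} = p_{x_1}^4+6p_{x_1}^3+7p_{x_1}^2+p_{x_1}$, integrating out $x_3$ against $\mu_d(A_{x_1}\cap A_{x_2})$ and exploiting $p_x\leq q_d$ yields $\gamma_2(\cE_d) = \int p_x^2\,\mu_d(\id x) = O(\theta q_d)$ and $\gamma_1(\cE_d) = O(\theta q_d^{3/2})$. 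Hence $\sqrt{\gamma_2(\cE_d)} = O(q_d^{1/2})$ while $\sqrt{\gamma_1(\cE_d)} = O(q_d^{3/4})$ is of strictly lower order, so both are absorbed into the first term of the claimed bound.

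The genuinely new contribution is $\gamma_{3,P}(\cE_d)$. Here I would use that $D_x\cE_d\sim\cP(p_x)$, for which $\Eabs*{D_x\cE_d(D_x\cE_d-1)}^2 = p_x^4+4p_x^3+2p_x^2$ and $\Eabs*{D_x\cE_d}^2 = p_x^2+p_x$. Since $p_x\to 0$ uniformly, the integrand behaves like $\sqrt2\,p_x^{3/2}$, so $\gamma_{3,P}(\cE_d) = \int p_x^{3/2}(1+o(1))\sqrt2\,\mu_d(\id x) = O(\theta q_d^{1/2})$ and thus $\gamma_{3,P}(\cE_d)/\theta = O(q_d^{1/2})$, matching the first term of the target. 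The remaining two terms follow from the formulas recalled in the introduction: $\E*{\cE_d} = \frac12\kappa_d^2\lambda_d^2\delta_d^d$ gives $\abs*{\E*{\cE_d}-\theta}$ exactly, while the variance sandwich gives $\abs*{\V*{\cE_d}-\theta} \leq \abs*{\tfrac12\kappa_d^2\lambda_d^2\delta_d^d-\theta} + (1+\tfrac{\delta_d}{2})^d\kappa_d^3\lambda_d^3\delta_d^{2d}$, where $(1+\tfrac{\delta_d}{2})^d\to e^{1/2}$ and $\kappa_d^3\lambda_d^3\delta_d^{2d} = q_d\cdot\kappa_d^2\lambda_d^2\delta_d^d = O(q_d)$. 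Collecting everything, bounding the constant prefactor $\tfrac{1-e^{-\theta}}{\theta}$, and absorbing all $O(q_d^{1/2})$ and $O(q_d)$ contributions into $\bC_1(\kappa_d\lambda_d\delta_d^d)^{1/2}$ and the mean deviation into $\bC_2\abs*{\tfrac12\kappa_d^2\lambda_d^2\delta_d^d-\theta}$ yields the inequality for all $d\geq\bD$; since both terms vanish as $d\to\infty$, the convergence $\cE_d\os{D}{\longrightarrow}\cP(\theta)$ follows.

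I expect the main obstacle to be the high-dimensional geometric bookkeeping needed to turn the heuristic $p_x\asymp q_d$ into rigorous bounds with \emph{absolute} constants. One must control $\Lambda_d(A_x)$ and the triple integrals defining $\gamma_1,\gamma_2$ uniformly in $x$ and $d$, and in particular account for the fact that, unlike in fixed dimension, the boundary shell of relative width $\delta_d/2=\tfrac{1}{2d}$ about $\partial\BB^d$ is a constant fraction of the ball (the factors $(1\pm\tfrac{\delta_d}{2})^d\to e^{\pm1/2}$); this affects the constants but not the orders in $q_d$. One must then verify that the numerous subleading contributions --- the lower powers of $p_x$ in the factorial moments, the $e^{\pm1/2}$ shell factors, and the Stirling asymptotics of $\kappa_d$ --- are all genuinely dominated by $q_d^{1/2}$ once $d$ exceeds an explicit threshold $\bD$. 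The reuse of the CLT computations from \cite{GrygierekThaele2016} should, however, make this step largely mechanical rather than conceptually hard.
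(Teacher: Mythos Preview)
Your proposal is correct and follows essentially the same route as the paper: apply the general Poisson bound (the paper routes it through the $U$-statistic specialization in Proposition~\ref{prop:general-bound:u-stat}, but that is just Theorem~\ref{thm:poisson-limit} written out for indicator kernels), compute the same moment formulas for $D_x\cE_d$ and $D^2_{x_1,x_2}\cE_d$, and obtain the orders $\gamma_1=O(q_d^{3/2})$, $\gamma_2=O(q_d)$, $\gamma_{3,P}=O(q_d^{1/2})$ together with the expectation and variance deviations. The only cosmetic difference is that you identify $D_x\cE_d\sim\cP(p_x)$ to read off the (factorial) moments directly, whereas the paper obtains the identical polynomials $P(x)$ and $Q(x)$ via Mecke's formula as in \cite{GrygierekThaele2016}.
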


\begin{remark}
	If the expectation tends to zero, \eqref{eq:phase:3}, we also have $\V*{\cE_d} \rightarrow 0$, indicating that the edge-counting statistic vanishes in the limit, since the random graph contains almost surely no edges.
\end{remark}

\bigskip

The rest of this text is structured as follows.
In Section \ref{sec:prelim}  we recall some necessary background material on Poisson functionals and the Malliavin-Stein method. In particular we introduce Mehler's formula that will be the core ingredient in the proof of Theorem \ref{thm:poisson-limit} in Section \ref{sec:proof:poisson-limit}.
In Section \ref{sec:general-bound} we derive a general bound for second order $U$-statistics. The final Section \ref{sec:proof:edge-count-poisson-limit} contains the proof of Theorem \ref{thm:edge-count-poisson-limit}.

\section{Preliminaries}\label{sec:prelim}

The $d$-dimensional Euclidean space is denoted by $\RR^d$ and we let $\sB^d$ be the Borel $\sigma$-field on $\RR^d$. The Lebesgue measure on $\RR^d$ is indicated by $\Lambda_d$.
A $d$-dimensional ball with radius $r > 0$ and center in $z \in \RR^d$ is defined by
\begin{align*}
	\BB^d_r(z) := \cbras*{x \in \RR^d : \norm{x-z} \leq r},
\end{align*}
where $\norm{\cdot}$ stands for the usual Euclidean norm.
We shall write $\BB^d$ instead of $\BB^d_1(0)$ and denote by
\begin{align*}
	\kappa_d := \Lambda_d(\BB^d) = \frac{\pi^\frac{d}{2}}{\Gamma\sbras*{1 + \frac{d}{2}}}
\end{align*}
the volume of the $d$-dimensional unit ball $\BB^d$, where $\Gamma\sbras*{\cdot}$ is Euler's gamma function.

We will use the Wasserstein-distance for the normal approximation and the total variation distance for the Poisson approximation, see for instance \cite[Section 2.1]{BourguinPeccati2016}.

\begin{definition}\label{def:wasserstein}
	We denote by $\Lip{1}$ the class of Lipschitz functions $h:\RR \rightarrow \RR$ with Lipschitz constant less or equal to one, i.e. $h$ is absolutely continuous and almost everywhere differentiable with $\norm{h'}_\infty \leq 1$.
	Given two $\RR$-valued random variables $X,Y$, with $\Eabs*{X} < \infty$ and $\Eabs*{Y} < \infty$ the Wasserstein distance between the laws of $X$ and $Y$, written $\dist_W(X,Y)$ is defined as
	\begin{align*}
		\dist_W(X,Y) := \sup\limits_{h \in \Lip{1}} \abs*{\E*{h(X)} - \E*{h(Y)}}.
	\end{align*}
\end{definition}

\begin{definition}\label{def:total-variation}
	Given two $\NNN$-valued random variables $X,Y$, the total variation distance between the laws of $X$ and $Y$, written $\dist_{TV}(X,Y)$ is defined as
	\begin{align*}
	\dist_{TV}(X,Y) := \sup\limits_{A \subseteq \NNN}\abs{\Prob*{X \in A} - \Prob*{Y \in A}}.
	\end{align*}
\end{definition}

\subsection{Poisson functionals and Malliavin-Stein Method}\label{sec:prelim:malliavin}

Let $(\XX, \sX, \mu)$ be a Borel measure space with $\sigma$-finite and non-atomic measure $\mu$ such that $\mu(\XX) > 0$. For $p > 0$ and $n \in \NNN$ we denote by $L^p(\mu^n)$ the set of all measurable functions $f: \XX^n \rightarrow \RR$ such that $\int\abs{f}^p \id \mu^n < \infty$.

We use the symbol $\rN_\sigma := \rN_\sigma(\XX)$ to indicate the class of all $\sigma$-finite measures $\chi$ on $\XX$ with $\chi(B) \in \NNN \cup \cbras{\infty}$ for all $B \in \sX$ and supply the space $\rN_\sigma$ with the smallest $\sigma$-field $\sN_\sigma := \sN_\sigma(\XX)$ such that all mappings of the form $\chi \mapsto \chi(B)$ with $\chi \in \rN$ and $B \in \sX$ are measurable.

It will be convenient for us to identify a counting measure $\chi \in \rN_\sigma$ with its support and to write $x \in \chi$ if the point $x \in \XX$ is charged by $\chi$.
The Dirac measure concentrated at a point $x \in \XX$ is denoted by $\delta_x$.
This construction mostly follows \cite{Peccati2012} and \cite{LastPeccatiSchulte2016}.
We let $(\Omega, \sF, \dsP)$ be our underlying probability space and denote by $L^p(\dsP)$, $p > 0$, the space of all random variables $Y:\Omega \rightarrow \RR$ such that $\Eabs{Y}^p < \infty$.

Consider a $\sigma$-finite non-atomic measure $\mu$ on $\XX$. 
A Poisson point process $\eta$ with intensity measure $\mu$ is a random counting measure on $\XX$, that is a random element in $\rN_\sigma$, such that
\begin{enumerate}
	\item For all $B \in \sX$ and all $k \in \NNN$ it holds, that $\eta(B) \overset{d}{\sim} \text{Po}_{\mu(B)}$, i.e.,
		\begin{align*}
			\Prob*{\eta(B) = k} = \frac{\mu(B)^k}{k!}e^{-\mu(B)},
		\end{align*}
		and for $\mu(B) = \infty$, we set $\frac{\infty^k}{k!}e^{-\infty} = 0$ for all $k$.
	\item For all $m \in \NN$ and all pairwise disjoint measurable sets $B_1, \ldots, B_m \in \sX$, the random variables $\eta(B_1), \ldots, \eta(B_m)$ are independent.
\end{enumerate}

By a Poisson functional $F$ we understand a random variable $F \in L^2(\dsP)$, that is almost surely of the form $F = f(\eta)$, where $f: \rN_\sigma \rightarrow \RR$ is some measurable function, the so-called representative of $F$.
For a Poisson functional $F$ with representative $f$ and $x \in \XX$ we define the first-order difference operator 
\begin{align}\label{eq:DxF}
	D_xF := D_xf(\eta) := f(\eta + \delta_x) - f(\eta),
\end{align}
and for $m \geq 2$ points $x_1, \ldots, x_m \in \XX$ the $m$-th-order difference operator $D_{x_1,\ldots, x_m}F$ is defined inductively by
\begin{align*}
	D^m_{x_1,\ldots,x_m}F & := D^{m-1}_{x_2, \ldots, x_m}(D_{x_1}F),
\end{align*}
where $D_{x}^1F = D_xF$.
Note that this definition does not depend on the choice of the representative $f$ $\mu^m$-a.e.\ and $\dsP$-a.s.\ and further that $D^m_{x_1,\ldots, x_m}F$ is symmetric in the arguments $x_1, \ldots, x_m$.

In the following we will denote by $DF$ resp. $D^mF$ the mappings
\begin{align*}
	\begin{array}{rrcl}
		DF:\XX \rightarrow \RR, & \quad x & \overset{DF}{\longmapsto} & D_xf(\eta),\\
		D^mF:\XX \rightarrow \RR, & \quad (x_1, \ldots, x_m) & \overset{D^mF}{\longmapsto} & D_{x_1, \ldots, x_m}^m f(\eta).
	\end{array}
\end{align*}

For a short introduction to the Malliavin-Calculus we recall some of the important tools in the development of the theory. For a deeper discussion of Fock Spaces and Chaos Expansion as well as Malliavin-Calculus and Malliavin-Stein Method we refer the reader to  \cite{Last2016} and the books \cite{LastPenrose2018,PeccatiReitzner2016}.
We introduce the notion of the Wiener-It\^{o} chaos expansion, see \cite{LastPeccatiSchulte2016} and the references therein, especially \cite{LastPenrose2011} for more details and proofs.

Every Poisson functional $F$ admits a representation of the type 
\begin{align*}
	F = \E*{F} + \sum\limits_{n = 1}^\infty I_n(f_n)
\end{align*}
where the series coverges in $L^2(\dsP)$. For each $n \geq 1$, the kernel $f_n$ is given by the (scaled) expectation of the $n$-order difference operator, i.e.\ $f_n := \frac{1}{n!} \E*{D^nF}$ and $I_n(\cdot)$ denotes the $n$-th order Wiener-It\^{o} integral. This representation is known as Wiener-It\^{o} chaos expansion of $F$.

We say a Poisson functional lies in the domain of $D$, $F \in \dom{D}$, if 
\begin{align*}
	\sum\limits_{n=1}^\infty n n! \norm{f_n}^2_{L^2(\mu^n)} < \infty.
\end{align*}
In this case $D$ is called the Malliavin derivative operator associated with the Poisson process $\eta$, and it holds $\dsP$-a.s. and $\mu$-a.e., $x \in \XX$, that
\begin{align*}
	D_xF = \sum_{n=1}^\infty n I_{n-1}(f_n(x,\cdot)),
\end{align*}
where the right hand side is the definition of the Malliavin derivative operator and the left hand side is the path-wise defined first-order difference operator given by \eqref{eq:DxF}.

Note that the following Lemma can be used to easily check if a Poisson functional lies in the domain of $D$.

\begin{lemma}[{\cite[Lemma 3.1]{PeccatiThaele2013}}]
	Let $F \in L^2(\dsP)$ denote a Poisson functional with representative $f$ such that
	\begin{align*}
		\dsE \int\limits_{\XX} (f(\eta+\delta_x) - f(\eta))^2 \mu(\id x) < \infty.
	\end{align*}
	Then $F \in \dom{D}$.
\end{lemma}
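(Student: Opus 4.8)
The plan is to reduce the membership $F \in \dom{D}$ to the hypothesis by establishing the energy identity
\begin{align*}
	\dsE \int_{\XX} (D_xF)^2 \, \mu(\id x) = \sum_{n=1}^\infty n\, n!\, \norm{f_n}_{L^2(\mu^n)}^2,
\end{align*}
valid for every $F \in L^2(\dsP)$ with both sides understood as elements of $[0,\infty]$. Once this is available the claim is immediate: the left-hand side is finite by assumption, hence so is the right-hand side, which is precisely the defining condition for $F \in \dom{D}$.

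To prove the identity without presupposing what we want to conclude, I would argue by truncation. For $M \in \NNN$ set $F_M := \E*{F} + \sum_{n=1}^M I_n(f_n)$, a functional with finite chaos expansion, so that $F_M \in \dom{D}$ trivially. Using the elementary action of the difference operator on a single Wiener-It\^{o} integral, $D_x I_n(f_n) = n\, I_{n-1}(f_n(x,\cdot))$, together with the orthogonality and isometry relations for multiple integrals and Tonelli's theorem (the integrand is nonnegative), one computes directly
\begin{align*}
	\dsE \int_{\XX} (D_xF_M)^2 \, \mu(\id x) = \sum_{n=1}^M n^2 (n-1)!\, \norm{f_n}_{L^2(\mu^n)}^2 = \sum_{n=1}^M n\, n!\, \norm{f_n}_{L^2(\mu^n)}^2 =: S_M,
\end{align*}
where I have used $\int_{\XX} \norm{f_n(x,\cdot)}_{L^2(\mu^{n-1})}^2 \mu(\id x) = \norm{f_n}_{L^2(\mu^n)}^2$ and $n^2(n-1)! = n\, n!$. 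Since the partial sums increase, $S_M \uparrow \sum_{n\geq 1} n\, n!\, \norm{f_n}_{L^2(\mu^n)}^2 \in [0,\infty]$.

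It remains to compare the truncations with $F$ itself. Here I would invoke the chaos expansion of the pathwise difference operator for a general square-integrable functional (the Fock-space isomorphism of Last and Penrose), namely that for $\mu$-a.e.\ $x$ one has $D_xF = \sum_{n\geq 1} n\, I_{n-1}(f_n(x,\cdot))$ in $L^2(\dsP)$; this is consistent with $f_n = \frac{1}{n!}\E*{D^nF}$ since the order-$n$ kernel of $D_xF$ equals $\frac{1}{n!}\E*{D^n(D_xF)} = (n+1)\,f_{n+1}(x,\cdot)$. This exhibits $D_xF_M$ as the orthogonal projection of $D_xF$ onto the chaoses of order at most $M-1$, whence $\norm{D_xF_M}_{L^2(\dsP)} \leq \norm{D_xF}_{L^2(\dsP)}$ for $\mu$-a.e.\ $x$ by Bessel's inequality. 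Integrating this over $\XX$ gives $S_M \leq \dsE \int_{\XX}(D_xF)^2\,\mu(\id x)$ for every $M$, and letting $M \to \infty$ yields $\sum_{n\geq1} n\, n!\,\norm{f_n}_{L^2(\mu^n)}^2 \leq \dsE\int_{\XX}(D_xF)^2\,\mu(\id x) < \infty$, which is the desired membership. In fact both sides then agree, recovering the energy identity, but only the inequality is needed here.

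The step I expect to be the main obstacle is this last one: justifying that $D_xF_M$ is genuinely the order-$(\leq M-1)$ projection of $D_xF$ for an arbitrary $F \in L^2(\dsP)$, i.e.\ the validity of the difference-operator chaos expansion \emph{before} one knows $F \in \dom{D}$. This is exactly the point where the pathwise and the Fock-space definitions of the difference operator must be reconciled, and it is the content of the cited structural result; the remaining computations (isometry, Fubini--Tonelli, reindexing) are routine.
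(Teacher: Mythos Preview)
The paper does not prove this lemma at all: it is stated as a citation of \cite[Lemma~3.1]{PeccatiThaele2013} and used as a black box, so there is no proof in the paper to compare against. Your reconstruction is the standard argument and is correct in outline; the step you flag as the main obstacle---that the pathwise difference $D_xF$ has, for \emph{every} $F\in L^2(\dsP)$, a chaos expansion with $n$-th kernel $(n+1)f_{n+1}(x,\cdot)$---is indeed the whole content of the result, and it is supplied by the Fock-space/Last--Penrose machinery rather than by anything you wrote down, so your proof really amounts to reducing the lemma to that structural fact plus routine isometry and monotone-convergence bookkeeping.
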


The Wiener It\^{o} chaos expansion gives rise to the Ornstein-Uhlenbeck generator $L$, that is defined for all Poisson functionals $F \in \dom{L}$, i.e. 
\begin{align*}
	\sum\limits_{n=1}^\infty n^2 n! \norm{f_n}^2_{L^2(\mu^n)} < \infty,
\end{align*}
by
\begin{align*}
	LF = - \sum\limits_{n=1}^\infty n I_n(f_n),
\end{align*}
and its (pseudo) inverse $L^{-1}$ is given by
\begin{align*}
	L^{-1}F := - \sum\limits_{n=1}^\infty \frac{1}{n} I_n(f_n).	
\end{align*}

In \cite[Section 3, Theorem 3.1]{PeccatiSoleTaqquUtzet2010} the Malliavin-Calculus was combined with Stein's method to derive a bound on the Wasserstein distance between the law of a standardized Poisson Functional $F \in \dom{D}$ and the standard Gaussian distribution. This bound as well as the bound derived in \cite[Theorem 3.1]{Peccati2012}, stated here as Theorem \ref{thm:poisson-limit:malliavin-bound} for Poisson approximation in the total variation distance rely on the inverse $L^{-1}$ of the Ornstein-Uhlenbeck generator $L$, which generally requires the calculation of the Wiener-It\^{o} chaos expansion of $F$.
In \cite{LastPeccatiSchulte2016} this was solved for the normal approximation case by establishing and applying a general Mehler formula for Poisson processes which allows to represent the inverse Ornstein-Uhlenbeck generator in terms of thinned Poisson point processes to derive bounds that only rely on the moments of the first- and second-order difference operators $D_xF$ and $D^2_{x_1,x_2}F$.

\subsection{Mehler's formula}\label{sec:prelim:mehler}

For the sake of brevity we only introduce Mehler's formula and the derived results we will need in the proof of Theorem \ref{thm:poisson-limit} and refer the reader for the full coverage to \cite{LastPeccatiSchulte2016}.

Let $s \in [0,1]$ and denote by $\eta^{(s)}$ the $s$-thinning of our Poisson point process $\eta$ and by $\Pi_\nu$ the distribution of a Poisson point process with intensity measure $\nu$. 
We define the operator $P_s$ by
\begin{align*}
	P_sF := \int \Econd*{f(\eta^{(s)} + \chi)}{\eta} \Pi_{(1-s)\mu}(\id \chi),
\end{align*}
where the conditional expectation is taken with respect to the random thinning and the Poisson point process $\chi$, conditioned on $\eta$. Using the operator $P_s$, we derive Mehler's formula:

\begin{theorem}[{Mehler's formula, \cite[Theorem 3.2]{LastPeccatiSchulte2016}}]\label{thm:mehlers-formula}
	Let $F$ be a Poisson functional and $\E*{F} = 0$, then we have $\dsP$-a.s. that
	\begin{align*}
		L^{-1}F = - \int\limits_0^1 s^{-1} P_sF \id s.
	\end{align*}
\end{theorem}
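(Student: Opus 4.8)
The plan is to diagonalise $P_s$ on the Wiener--It\^o chaos and then integrate the resulting eigenvalues against the weight $s^{-1}\,\id s$, matching them with the multipliers $-\frac1n$ that define $L^{-1}$. Since $\E*{F}=0$, the chaos expansion of $F$ carries no constant term, so I write $F=\sum_{n=1}^\infty I_n(f_n)$ with convergence in $L^2(\dsP)$ and $f_n=\frac1{n!}\E*{D^nF}$. Everything then reduces to understanding how $P_s$ acts on a single chaos $I_n(f_n)$.

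The decisive step, and the one I expect to be the main obstacle, is the spectral identity
\begin{align*}
	P_s I_n(f_n)=s^n I_n(f_n),\qquad s\in[0,1],\ n\geq1.
\end{align*}
I would establish it directly from the thinning--superposition definition of $P_s$ by testing on the exponential functionals $\varepsilon(u):=\prod_{x\in\eta}(1+u(x))\,e^{-\int u\,\id\mu}$, for bounded $u$ with $\int\abs{u}\,\id\mu<\infty$, whose chaos expansion is explicit, $\varepsilon(u)=\sum_{n\geq0}\frac1{n!}I_n(u^{\otimes n})$. Conditioning on $\eta$ and using that each point survives the $s$-thinning independently with probability $s$ turns the thinned product into $\prod_{x\in\eta}(1+s\,u(x))$, while the Poisson Laplace functional of the independent superposition $\chi\sim\Pi_{(1-s)\mu}$ contributes the scalar $e^{(1-s)\int u\,\id\mu}$; combined with the centering factor this gives $P_s\varepsilon(u)=\varepsilon(su)$. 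Expanding both sides into chaoses and comparing degree by degree yields $P_sI_n(u^{\otimes n})=s^nI_n(u^{\otimes n})$, and the identity passes to an arbitrary symmetric kernel $f_n$ by polarisation and the density of the linear span of $\{u^{\otimes n}\}$ in $L^2_{\mathrm{sym}}(\mu^n)$, using that $P_s$ is an $L^2(\dsP)$-contraction (being a conditional expectation followed by integration against a probability measure).

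Granting the spectral identity, the remainder is bookkeeping. Applying $P_s$ term by term --- legitimate because $P_s$ is an $L^2$-contraction and the chaos series converges in $L^2$ --- gives $P_sF=\sum_{n=1}^\infty s^nI_n(f_n)$, hence
\begin{align*}
	-\int\limits_0^1 s^{-1}P_sF\,\id s
	&=-\sum_{n=1}^\infty\rbras*{\int\limits_0^1 s^{n-1}\,\id s}\,I_n(f_n)\\
	&=-\sum_{n=1}^\infty\frac1n I_n(f_n)=L^{-1}F,
\end{align*}
the last equality being the definition of $L^{-1}$. The singular weight $s^{-1}$ is harmless here: every chaos carries $n\geq1$, so $s^{n-1}$ is integrable at the origin with $\int_0^1 s^{n-1}\,\id s=\frac1n$, and indeed $s^{-1}P_sF=\sum_{n\geq1}s^{n-1}I_n(f_n)$ stays bounded in $L^2(\dsP)$ as $s\downarrow0$.

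The only analytic point requiring care is the interchange of sum and integral above, which I would carry out in $L^2(\dsP)$ using orthogonality of the chaoses. One has $\norm{P_sF-\sum_{n=1}^N s^nI_n(f_n)}^2_{L^2(\dsP)}=\sum_{n>N}s^{2n}n!\norm{f_n}^2_{L^2(\mu^n)}$, which is bounded uniformly in $s\in[0,1]$ by the tail of $\sum_{n\geq1}n!\norm{f_n}^2_{L^2(\mu^n)}=\V*{F}<\infty$; this legitimises a Fubini/dominated-convergence argument on $[0,1]\times\Omega$. The same summability gives $\sum_{n\geq1}\frac1{n^2}n!\norm{f_n}^2_{L^2(\mu^n)}\leq\V*{F}<\infty$, so $-L^{-1}F=\sum_{n\geq1}\frac1n I_n(f_n)$ is a genuine element of $L^2(\dsP)$ for every centred $F$, and no hypothesis beyond $\E*{F}=0$ is needed. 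Since both sides are fixed $L^2(\dsP)$ random variables, equality in $L^2$ is equality $\dsP$-almost surely, as claimed.
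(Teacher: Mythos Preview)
The paper does not supply its own proof of this theorem; it is quoted from \cite[Theorem 3.2]{LastPeccatiSchulte2016} and used as a black box. Your argument is correct and is essentially the standard route taken in that reference: diagonalise $P_s$ on the Wiener--It\^o chaoses via $P_sI_n(f_n)=s^nI_n(f_n)$, then integrate the multipliers $s^{n-1}$ over $[0,1]$ to recover the $1/n$ factors defining $L^{-1}$. Your derivation of the spectral identity through exponential vectors $\varepsilon(u)$, the polarisation and density step, and the $L^2$ justification of the sum--integral interchange are all sound.
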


We will need the following inequalities in the proof of our Poisson limit theorem, Theorem \ref{thm:poisson-limit}.

\begin{lemma}[{\cite[Lemma 3.4]{LastPeccatiSchulte2016}}]\label{lem:malliavin:moments}
	Let $F$ be a Poisson functional and $p \geq 1$, then
	\begin{align*}
		\Eabs*{D_xL^{-1}F}^p \leq \Eabs*{D_xF}^p, \quad \mu\text{-a.e. } x \in \XX,
	\end{align*}
	and
	\begin{align*}
		\Eabs*{D^2_{x_1,x_2}L^{-1}F}^p \leq \Eabs*{D^2_{x_1,x_2}F}^p, \quad \mu^2\text{-a.e. } (x_1,x_2) \in \XX^2.
	\end{align*}
\end{lemma}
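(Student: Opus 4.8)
The plan is to combine Mehler's formula (Theorem~\ref{thm:mehlers-formula}) with a commutation relation between the difference operator and the operator $P_s$, and then to use that $P_s$ acts as an $L^p$-contraction. The key step, which I regard as the heart of the argument, is the identity
\begin{align*}
	D_x P_s F = s\, P_s(D_x F), \qquad \mu\text{-a.e. } x \in \XX.
\end{align*}

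To establish it, I would track how an added point behaves under the $s$-thinning. Writing $P_s F(\eta) = \Econd*{f(\eta^{(s)} + \chi)}{\eta}$ with $\chi$ an independent Poisson process of intensity $(1-s)\mu$, the extra point of $\eta + \delta_x$ is retained by the thinning independently with probability $s$, so $(\eta + \delta_x)^{(s)}$ has the same law as $\eta^{(s)} + B\delta_x$ with $B \sim \mathrm{Bernoulli}(s)$. Conditioning on $B$ and using $f(\cdot + \delta_x) = f(\cdot) + D_x f$ gives $P_s F(\eta + \delta_x) = P_s F(\eta) + s\, P_s(D_x F)(\eta)$, and subtracting $P_s F(\eta)$ yields the claimed relation. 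Applying it twice (first in $x_2$, then in $x_1$) gives the second-order analogue $D^2_{x_1,x_2} P_s F = s^2\, P_s(D^2_{x_1,x_2} F)$.

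With these identities I would differentiate Mehler's formula under the integral sign. In the first-order case the weight $s^{-1}$ cancels the factor $s$, so that
\begin{align*}
	D_x L^{-1}F = -\int\limits_0^1 P_s(D_x F)\, \id s;
\end{align*}
since $\id s$ is a probability measure on $[0,1]$, Jensen's inequality gives $\abs*{D_x L^{-1}F}^p \leq \int_0^1 \abs*{P_s(D_x F)}^p \id s$. In the second-order case one gets $D^2_{x_1,x_2} L^{-1}F = -\int_0^1 s\, P_s(D^2_{x_1,x_2} F)\, \id s$, and after rewriting $s\,\id s = \tfrac12 (2s\,\id s)$ with the probability measure $2s\,\id s$, Jensen yields the same type of bound up to the harmless constant $2^{-p} \leq 1$.

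It remains to show that $P_s$ contracts $L^p(\dsP)$, i.e.\ $\E*{\abs*{P_s G}^p} \leq \E*{\abs*{G}^p}$. This follows from conditional Jensen, $\abs*{P_s G}^p \leq \Econd*{\abs*{g(\eta^{(s)} + \chi)}^p}{\eta}$, together with the Poisson superposition property $\eta^{(s)} + \chi \overset{d}{=} \eta$, which makes the right-hand side integrate to $\E*{\abs*{G}^p}$. Taking expectations in the two Jensen bounds and inserting the contraction with $G = D_x F$, respectively $G = D^2_{x_1,x_2} F$, then gives both inequalities. I expect the main obstacle to lie in the bookkeeping of the commutation identity, and specifically in the Bernoulli splitting of the added point under the thinning, from which the decisive factor $s$ (and hence $s^2$) arises.
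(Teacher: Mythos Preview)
The paper does not prove this lemma; it is quoted verbatim from \cite[Lemma~3.4]{LastPeccatiSchulte2016} and used as a black box. Your proposal is correct and in fact reproduces the argument of that reference: the commutation identity $D_xP_sF = s\,P_s(D_xF)$ (which is \cite[Lemma~3.3]{LastPeccatiSchulte2016}), insertion into Mehler's formula to obtain $D_xL^{-1}F = -\int_0^1 P_s(D_xF)\,\id s$, Jensen's inequality against the probability measure $\id s$, and the $L^p$-contractivity of $P_s$ via the superposition $\eta^{(s)}+\chi \overset{d}{=}\eta$ are exactly the steps used there. Your treatment of the second-order case via the probability measure $2s\,\id s$ is a harmless variant (and even yields the slightly sharper constant $2^{-p}$); the original argument simply bounds $\abs{sP_s(D^2F)}\leq\abs{P_s(D^2F)}$ before applying Jensen with $\id s$.
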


Since $L^{-1}F = L^{-1}(F - \E*{F})$ and $D_xF = D_x(F - \E*{F})$, we can rephrase the result on the covariance, see \cite[Theorem 4.1]{LastPeccatiSchulte2016}, to obtain a result on the variance of our Poisson functional $F$:

\begin{theorem}\label{thm:malliavin:var}
	Let $F \in \dom{D}$, then
	\begin{align*}
		\E*{\rbras*{\V*{F} - \int\limits_{\XX} (D_xF)(-D_xL^{-1}F) \mu(\id x)}^2} \leq 4 \gamma_1(F) + \gamma_2(F).
	\end{align*}
\end{theorem}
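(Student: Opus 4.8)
The plan is to treat the statement as an immediate consequence of the covariance estimate \cite[Theorem 4.1]{LastPeccatiSchulte2016}, whose proof already controls the mean-square deviation of the random integral $\int_\XX (D_xF)(-D_xL^{-1}F)\,\mu(\id x)$ in terms of $\gamma_1(F)$ and $\gamma_2(F)$, but in the \emph{centered} setting. Accordingly, I would first reduce the general case $F \in \dom{D}$ to a centered one and then transport the bound back. Set $G := F - \E*{F}$, so that $\E*{G} = 0$ and $\V*{G} = \V*{F}$; membership $G \in \dom{D}$ is inherited from $F$, since the two functionals differ only by the constant $\E*{F}$.

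The heart of the reduction is the remark made just before the statement, namely that neither the first-order difference operator nor the inverse Ornstein--Uhlenbeck generator feels the additive constant $\E*{F}$. Indeed, $D$ annihilates constants, so $D_x(F - \E*{F}) = D_xF$ $\mu$-a.e.\ directly from the definition \eqref{eq:DxF}. For $L^{-1}$ I would argue from the Wiener--It\^o chaos expansion: passing from $F$ to $G$ changes only the zeroth-order term of the expansion, which $L^{-1}$ sends to zero, whence $L^{-1}G = L^{-1}F$ and therefore $D_xL^{-1}G = D_xL^{-1}F$. Consequently the kernels $D_xF$ and $D^2_{x_1,x_2}F$ entering $\gamma_1$ and $\gamma_2$ are unchanged, giving $\gamma_1(G) = \gamma_1(F)$ and $\gamma_2(G) = \gamma_2(F)$, and the random integral $\int_\XX (D_xG)(-D_xL^{-1}G)\,\mu(\id x)$ coincides $\dsP$-a.s.\ with $\int_\XX (D_xF)(-D_xL^{-1}F)\,\mu(\id x)$.

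It then remains to invoke \cite[Theorem 4.1]{LastPeccatiSchulte2016} for $G$. In the centered setting that result identifies $\V*{G}$ with the expectation $\E*{\int_\XX (D_xG)(-D_xL^{-1}G)\,\mu(\id x)}$, which makes the expression $\V*{G} - \int_\XX (D_xG)(-D_xL^{-1}G)\,\mu(\id x)$ mean-zero, and bounds its second moment by $4\gamma_1(G) + \gamma_2(G)$. Feeding in the centering invariances of the previous paragraph together with $\V*{G} = \V*{F}$ turns this directly into the claimed inequality for $F$.

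Since the entire analytic content -- the product and Poincar\'e-type formulas for difference operators and the use of Mehler's formula (Theorem \ref{thm:mehlers-formula}) to estimate the variance of $\int_\XX (D_xF)(-D_xL^{-1}F)\,\mu(\id x)$ -- is already carried out in \cite{LastPeccatiSchulte2016}, the only genuine work here is bookkeeping. The point to be careful about is verifying that each of the four objects $\gamma_1$, $\gamma_2$, the random integral, and the variance transforms correctly under $F \mapsto F - \E*{F}$, and that $F \in \dom{D}$ alone suffices both for the covariance identity and for the well-definedness of $D_xL^{-1}F$ used in Lemma \ref{lem:malliavin:moments}. The single structural ingredient, the insensitivity of $L^{-1}$ to the zeroth chaos, is immediate from its series definition, so I expect no real obstacle beyond matching notation with the cited theorem.
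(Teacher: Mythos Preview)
Your proposal is correct and mirrors the paper's own argument exactly: the paper does not give a standalone proof but simply remarks, in the sentence preceding the theorem, that $L^{-1}F = L^{-1}(F-\E*{F})$ and $D_xF = D_x(F-\E*{F})$, and then invokes \cite[Theorem~4.1]{LastPeccatiSchulte2016}. Your write-up is a slightly more detailed unfolding of that same one-line reduction.
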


\section{Proof of Theorem \ref{thm:poisson-limit}}\label{sec:proof:poisson-limit}

Let us first recall the Malliavin bounds for Poisson approximation from \cite[Theorem 3.1]{Peccati2012}:

\begin{theorem}\label{thm:poisson-limit:malliavin-bound}
	Let $\eta$ be a Poisson point process on $\XX$ with $\sigma$-finite and non-atomic intensity measure $\mu$ and let $F$ be an $\NNN$-valued Poisson functional satisfying $F \in \dom{D}$. Further let $\cP(\theta)$ be a Poisson distributed random variable with parameter $\theta > 0$. Then
	\begin{align*}
		\dist_{TV}\rbras*{F,\cP(\theta)} & \leq  \frac{1 - e^{-\theta}}{\theta}\rbras*{ \abs*{\E*{F} - \theta} +  \Eabs*{\theta - \skp{DF}{-DL^{-1}F}_{L^2(\mu)}}}\\
			& \quad + \frac{1-e^{-\theta}}{\theta^2} \dsE \int\limits_{\XX} \abs[\big]{D_xF(D_xF-1)} \abs[\big]{D_xL^{-1}F} \mu(\id x).
	\end{align*}
\end{theorem}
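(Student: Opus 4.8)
The natural route to Theorem \ref{thm:poisson-limit:malliavin-bound} is the Chen--Stein method for Poisson approximation combined with the Malliavin integration by parts formula, and I would organize it in four steps. \emph{Step 1 (Chen--Stein reduction).} For each $A \subseteq \NNN$ I would introduce the solution $g = g_A$ of the Poisson Stein equation
\begin{align*}
	\theta g(k+1) - k g(k) = \1\cbras*{k \in A} - \Prob*{\cP(\theta) \in A}, \qquad k \in \NNN,
\end{align*}
normalised by $g(0) = 0$. The classical Chen--Stein estimates supply the magic factors for the sup-norm of $g$, of its first difference, and of its second difference; in particular $\sup_k\abs*{g(k+1)-g(k)} \leq \frac{1-e^{-\theta}}{\theta}$ and $\sup_k\abs*{g(k+2)-2g(k+1)+g(k)} \leq \frac{2(1-e^{-\theta})}{\theta^2}$. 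By Definition \ref{def:total-variation} the whole problem then reduces to bounding $\abs*{\E*{\theta g(F+1) - F g(F)}}$ uniformly over $A$, since $\dist_{TV}(F,\cP(\theta)) = \sup_A\abs*{\E*{\theta g_A(F+1) - F g_A(F)}}$.

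\emph{Step 2 (Malliavin integration by parts).} Using the divergence representation $F - \E*{F} = \delta(-DL^{-1}F)$, where $\delta$ is the Skorokhod integral, together with the duality relation $\E*{\delta(u)G} = \E*{\skp{u}{DG}_{L^2(\mu)}}$ applied to $u = -DL^{-1}F$ and $G = g(F)$, I would derive the covariance identity
\begin{align*}
	\E*{(F-\E*{F})\,g(F)} = \dsE\int\limits_\XX (-D_xL^{-1}F)\,\rbras*{g(F+D_xF) - g(F)}\,\mu(\id x),
\end{align*}
where I have used the pathwise chain rule $D_x g(F) = g(F+D_xF) - g(F)$ and the fact that $F$, hence $D_xF$, is $\NNN$-valued. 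Justifying this identity for the merely bounded, non-smooth functional $g(F)$ is a technical point, settled by the assumption $F \in \dom{D}$ and the boundedness of $g$ and of its first difference (an approximation argument).

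\emph{Step 3 (discrete second-order expansion).} I would split the increment linearly, $g(F+D_xF) - g(F) = D_xF\,(g(F+1)-g(F)) + R_x$, with remainder $R_x := g(F+D_xF)-g(F)-D_xF\,(g(F+1)-g(F))$. The linear part reassembles into $(g(F+1)-g(F))\,\skp{DF}{-DL^{-1}F}_{L^2(\mu)}$, and combining this with $\E*{\theta g(F+1)} = \theta\E*{g(F)} + \theta\,\E*{g(F+1)-g(F)}$ produces the exact decomposition
\begin{align*}
	\E*{\theta g(F+1) - F g(F)} &= (\theta - \E*{F})\,\E*{g(F)} + \E*{(g(F+1)-g(F))\rbras*{\theta - \skp{DF}{-DL^{-1}F}_{L^2(\mu)}}} \\
	&\quad - \dsE\int\limits_\XX (-D_xL^{-1}F)\,R_x\,\mu(\id x).
\end{align*}

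\emph{Step 4 (estimating the three pieces).} The first term is controlled by $\abs*{\E*{F}-\theta}$ times the sup-norm magic factor for $g$, and the second by $\Eabs*{\theta - \skp{DF}{-DL^{-1}F}_{L^2(\mu)}}$ times the first-difference magic factor $\frac{1-e^{-\theta}}{\theta}$; together the Chen--Stein bounds yield the common prefactor $\frac{1-e^{-\theta}}{\theta}$ of the statement. For the remainder I would observe that $R_x = 0$ whenever $D_xF \in \cbras{0,1}$, and that iterating first differences gives $\abs*{R_x} \leq \frac12\abs*{D_xF(D_xF-1)}\,\sup_k\abs*{g(k+2)-2g(k+1)+g(k)} \leq \frac{1-e^{-\theta}}{\theta^2}\abs*{D_xF(D_xF-1)}$; integrating against $\abs*{D_xL^{-1}F}$ then produces exactly the last term. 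Taking the supremum over $A$ completes the bound. The main obstacle is precisely this remainder: the crude Lipschitz estimate only delivers a weight $D_xF-1$ and a factor $\theta^{-1}$, so obtaining the quadratic weight $D_xF(D_xF-1)$ together with the sharper factor $\theta^{-2}$ genuinely forces the second-order Chen--Stein estimate and the telescoping of second differences carried out above.
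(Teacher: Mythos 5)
Your architecture is exactly the one behind the result the paper is quoting: the paper itself offers no proof of Theorem \ref{thm:poisson-limit:malliavin-bound}, recalling it from \cite{Peccati2012}, and your Steps 1--3 (Chen--Stein equation, the duality identity via $F-\E*{F}=\delta(-DL^{-1}F)$ together with the pathwise rule $D_xg(F)=g(F+D_xF)-g(F)$, and the exact three-term decomposition) reproduce the correct skeleton of that argument; your observation that $R_x=0$ when $D_xF\in\cbras*{0,1}$ and the discrete Taylor estimate $\abs*{R_x}\leq\tfrac12\abs*{D_xF(D_xF-1)}\sup_k\abs*{\Delta^2 g(k)}$ (valid also for negative integer increments) are likewise correct. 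The genuine gap sits in the two ``magic factors'' you invoke in Steps 1 and 4. First, the classical sup-norm bound for the Chen--Stein solution is $\sup_k\abs*{g_A(k)}\leq 1\wedge\sqrt{2/(e\theta)}$ (Barbour--Holst--Janson), which is of order $\theta^{-1/2}$, \emph{not} $\tfrac{1-e^{-\theta}}{\theta}$; so your first piece $(\theta-\E*{F})\,\E*{g(F)}$ is not controlled by $\tfrac{1-e^{-\theta}}{\theta}\abs*{\E*{F}-\theta}$ as claimed. Second, the asserted second-difference bound $\sup_k\abs*{g_A(k+2)-2g_A(k+1)+g_A(k)}\leq\tfrac{2(1-e^{-\theta})}{\theta^2}$ is not a classical estimate and is in fact false for large $\theta$: the Stein equation gives exactly
\begin{align*}
	\theta\,\rbras*{g_A(k+2)-2g_A(k+1)+g_A(k)} = (k+1-\theta)g_A(k+1)-(k-\theta)g_A(k)+\1\cbras*{k+1\in A}-\1\cbras*{k\in A},
\end{align*}
and choosing $A=\cbras*{m,m+1,\dots}$ with $m=\lceil\theta\rceil$ and $k=m-1$ makes the indicator difference equal to $1$ while the $g$-terms are $O(\theta^{-1/2})$, so the second difference is of order $\theta^{-1}$, a full factor $\theta$ larger than you claim. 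With the true factors your argument only yields a weaker inequality ($1\wedge\sqrt{2/(e\theta)}$ on the mean term, order $\theta^{-1}$ on the quadratic term), not the stated bound.

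This is not a patchable bookkeeping slip: the prefactor $\tfrac{1-e^{-\theta}}{\theta}$ on $\abs*{\E*{F}-\theta}$ cannot be correct as printed. Take $F=\eta(B)$ with $\mu(B)=\theta+1$, so that $F\sim\cP(\theta+1)$, $D_xF=\1\cbras*{x\in B}=-D_xL^{-1}F$, hence $\skp{DF}{-DL^{-1}F}_{L^2(\mu)}=\theta+1$ almost surely and the third term vanishes; the right-hand side of the theorem is then $\tfrac{2(1-e^{-\theta})}{\theta}$, while for $\theta=100$ one has
\begin{align*}
	\dist_{TV}\rbras*{\cP(101),\cP(100)} \geq \int\limits_{100}^{101}\Prob*{\cP(t)=100}\id t \approx (2\pi\cdot 100)^{-\frac12}\approx 0.04 > 0.02,
\end{align*}
so the inequality fails; the coefficient of the mean term must be of order $\theta^{-1/2}$, precisely where your Step 4 is too optimistic. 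Since the paper gives no proof but only the citation, nothing in the paper resolves this; a correct writeup along your lines should either assume $\E*{F}=\theta$ and treat the mean mismatch by a separate Poisson--Poisson comparison, or replace the claimed prefactors by the genuine Stein factors, accepting the correspondingly weaker constants in the second and third terms as well.
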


The main idea of the proof is to take Mehler's formula and its application from \cite[Sections 3 and 4]{LastPeccatiSchulte2016} and adapt this technique for the bound given by Theorem \ref{thm:poisson-limit:malliavin-bound}. 

\begin{proof}[Proof of Theorem \ref{thm:poisson-limit}]
	Using the Cauchy-Schwarz inequality we can bound the first term by
	
	\begin{align*}
		\Eabs*{\theta - \skp*{DF}{-DL^{-1}F}_{L^2(\mu)}} & \leq \abs*{\V*{F} - \theta} + \Eabs*{\V*{F} - \skp*{DF}{-DL^{-1}F}_{L^2(\mu)}}\\
		& \leq \abs*{\V*{F} - \theta} + \sqrt{\E*{\rbras*{\V*{F} - \skp*{DF}{-DL^{-1}F}_{L^2(\mu)}}^2}},
	\end{align*}
	
	and apply Theorem \ref{thm:malliavin:var} to derive
	\begin{align*}
		\E*{\rbras*{\V*{F} - \skp*{DF}{-DL^{-1}F}_{L^2(\mu)}}^2} \leq 4\gamma_1(F) + \gamma_2(F),
	\end{align*}
	which yields the first part of our bound
	\begin{align*}
		\Eabs*{\theta - \skp*{DF}{-DL^{-1}F}_{L^2(\mu)}} \leq \abs*{\V*{F} - \theta} + 2 \sqrt{\gamma_1(F)} + \sqrt{\gamma_2(F)}.
	\end{align*}
	
	The second term can be bounded by using Fubini's theorem and H\"olders-inequality with parameters $p = q = 2$. Thus
	
	\begin{align*}
		& \dsE \int\limits_{\XX} \abs[\big]{D_xF(D_xF-1)} \abs[\big]{D_xL^{-1}F} \mu(\id x) \\
		= & \int\limits_{\XX} \E*{\abs[\big]{D_xF(D_xF-1)} \abs[\big]{D_xL^{-1}F}} \mu(\id x)\\
		\leq & \int\limits_{\XX} \rbras*{\Eabs[\big]{D_xF(D_xF-1)}^2}^\frac{1}{2} \rbras*{\Eabs[\big]{D_xL^{-1}F}^2}^\frac{1}{2} \mu(\id x),
	\end{align*}
	
	which can be bounded using Lemma \ref{lem:malliavin:moments} by
	
	\begin{align*}
		\dsE \int\limits_{\XX} \abs[\big]{D_xF(D_xF-1)} \abs[\big]{D_xL^{-1}F} \mu(\id x) \leq \int\limits_{\XX} \rbras*{\Eabs[\big]{D_xF(D_xF-1)}^2}^\frac{1}{2} \rbras*{\Eabs[\big]{D_xF}^2}^\frac{1}{2} \mu(\id x),
	\end{align*}
	
	yielding the second part of our bound
	
	\begin{align*}
		\dsE \int\limits_{\XX} \abs[\big]{D_xF(D_xF-1)} \leq \gamma_{3,P}(F),
	\end{align*}
	completing the proof of Theorem \ref{thm:poisson-limit}.
\end{proof}

\section{\texorpdfstring{A general bound for second-order $U$-statistics}{A general bound for second-order U-statistics}}
\label{sec:general-bound}

In this section, we adapt the general bound for the normal approximation of second-order $U$-statistics, that was provided in \cite[Section 3]{GrygierekThaele2016} to the Poisson case, showing that some of the previous results therein can be reused.
Let $F_d$ denote a second-order $U$-statistics in the sense of \cite{ReitznerSchulte2013} based on a Poisson point process in $\RR^d$ having intensity measure $\mu$.
Formally we define 
\begin{align*}
	F_d := \frac{1}{2} \sum\limits_{(y_1,y_2) \in \eta^2_{\neq}} h(y_1,y_2)
\end{align*}
and assume that $h:\RR^d \times \RR^d \rightarrow \cbras*{0,1}$ is a symmetric measurable function, which we allow to depend on the space dimension $d$. Furthermore, we assume that $\E*{F_d^2} < \infty$.
Finally we define the two parameter integrals
\begin{align*}
	A(x) & := \int\limits_{\RR^d} h(x,y) \mu(\id y), \quad x \in \RR^d,\\
	B(x_1,x_2) & := \int\limits_{\RR^d} h(x_1,y)h(x_2,y) \mu(\id y), \quad x_1,x_2 \in \RR^d,
\end{align*}
cf. \cite[Section 3]{GrygierekThaele2016}, where we already omit the exponents of $h: \RR^d \times \RR^d \rightarrow \cbras*{0,1}$.

Following \cite[Section 3]{GrygierekThaele2016}, by Mecke's formula we have that
\begin{align}\label{eq:u-stat:expectation}
	\E*{F_d} = \frac{1}{2} \int\limits_{\RR^d}\int\limits_{\RR^d} h(x_1,x_2) \mu(\id x_1) \mu(\id x_2)
\end{align}
and
\begin{align}\label{eq:u-stat:variance}
	\begin{split}
	\V*{F_d} & = \int\limits_{\RR^d} \rbras*{\int\limits_{\RR^d} h(x_1,x_2) \mu(\id x_2)}^2 \mu(\id x_1) + \frac{1}{2}\int\limits_{\RR^d}\int\limits_{\RR^d}h(x_1,x_2) \mu(\id x_2) \mu(\id x_1)\\
		& = \E*{F} + \int\limits_{\RR^d} \rbras*{\int\limits_{\RR^d} h(x_1,x_2) \mu(\id x_2)}^2 \mu(\id x_1).
	\end{split}
\end{align}

Next, we compute the expectations occurring at the right-hand side of Theorem \ref{thm:poisson-limit} to prepare the bounds for the three terms $\gamma_1(F_d)$, $\gamma_2(F_d)$, and $\gamma_{3,P}(F_d)$.

\begin{lemma}\label{lemma:DxF}
	Let $x,x_1,x_2 \in \RR^d$.
	Then
	\begin{enumerate}[(a)]
		\item $\E{(D_xF_d)^2} = A(x)^2 + A(x)$, \label{lemma:DxF:2}
		\item $\E{\abs*{D_xF_d}^3} = A(x)^3 + 3A(x)^2 + A(x)$, \label{lemma:DxF:3}
		\item $\E{(D_xF_d)^4} = P(x)$, with \label{lemma:DxF:4}
		\begin{align*}
			P(x) := A(x)^4 + 6A(x)^3 + 7A(x)^2 + A(x),
		\end{align*}
		\item $\E{(D_xF_d(D_xF_d-1))^2} = Q(x)$, with \label{lemma:DxF:2m}
		\begin{align*}
			Q(x) := A(x)^4 + 4A(x)^3 + 2A(x)^2,
		\end{align*}
		\item $\E{(D_{x_1,x_2}F_d)^4} = h(x_1,x_2)$. \label{lemma:DxyF:4}
	\end{enumerate}
\end{lemma}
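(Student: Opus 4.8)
The plan is to evaluate each expectation by first computing the relevant difference operator explicitly and then exploiting the fact that the first-order difference is itself Poisson distributed.

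First I would determine $D_xF_d$. Adjoining the point $x$ to $\eta$ produces exactly the new ordered pairs $(x,y)$ and $(y,x)$ with $y \in \eta$, and by symmetry of $h$ these contribute equally; together with the prefactor $\frac{1}{2}$ this yields
\begin{align*}
	D_xF_d = f(\eta + \delta_x) - f(\eta) = \sum\limits_{y \in \eta} h(x,y).
\end{align*}
The crucial observation is that, since $h$ takes values in $\cbras*{0,1}$, this sum equals $\eta(S_x)$ with $S_x := \cbras*{y \in \RR^d : h(x,y) = 1}$. By the defining property of the Poisson process, $\eta(S_x)$ is Poisson distributed with parameter
\begin{align*}
	\mu(S_x) = \int\limits_{\RR^d} h(x,y) \mu(\id y) = A(x).
\end{align*}
Hence, viewed as a random variable, $D_xF_d$ is Poisson with mean $A(x)$.

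With this in hand, parts (a)--(d) reduce to moment computations for a Poisson random variable $N$ with parameter $\lambda = A(x)$. I would use the factorial-moment identity $\E*{N(N-1)\cdots(N-k+1)} = \lambda^k$ and pass to ordinary moments, obtaining $\E*{N^2} = \lambda^2 + \lambda$, $\E*{N^3} = \lambda^3 + 3\lambda^2 + \lambda$ and $\E*{N^4} = \lambda^4 + 6\lambda^3 + 7\lambda^2 + \lambda$; substituting $\lambda = A(x)$ gives (a), (b) and (c), where in (b) the absolute value is harmless because $D_xF_d \geq 0$. For (d) I would use the factorial expansion
\begin{align*}
	\rbras*{N(N-1)}^2 = N(N-1)(N-2)(N-3) + 4\, N(N-1)(N-2) + 2\, N(N-1),
\end{align*}
so that $\E*{\rbras*{N(N-1)}^2} = \lambda^4 + 4\lambda^3 + 2\lambda^2 = Q(x)$.

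For part (e) I would compute the second-order difference. Applying $D_{x_2}$ to $D_{x_1}F_d = \sum_{y \in \eta} h(x_1,y)$ merely inserts the single additional term $h(x_1,x_2)$, whence
\begin{align*}
	D^2_{x_1,x_2}F_d = h(x_1,x_2),
\end{align*}
which is deterministic. As $h(x_1,x_2) \in \cbras*{0,1}$ satisfies $h(x_1,x_2)^4 = h(x_1,x_2)$, the claim follows. The main conceptual step throughout is the identification of $D_xF_d$ with the Poisson count $\eta(S_x)$; once this is secured, the remaining work is routine factorial-moment bookkeeping, the only subtlety being the tracking of the combinatorial (Stirling-number) coefficients when converting factorial moments into ordinary ones.
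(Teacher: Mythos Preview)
Your proof is correct, but it follows a different route from the paper. The paper does not identify $D_xF_d$ as a Poisson random variable; instead it refers back to \cite[Lemma~3]{GrygierekThaele2016} for parts (b), (c) and (e), where the moments are obtained by expanding the powers of $D_xF_d = \sum_{y \in \eta} h(x,y)$ into sums over tuples of points and applying the multivariate Mecke formula together with $h^2 = h$. Part (a) is handled in the same spirit via $\E*{(D_xF_d)^2} = \dsE \sum_{(y_1,y_2)\in\eta^2} h(y_1,x)h(y_2,x)$, and part (d) is obtained purely algebraically from (a)--(c) by writing $\E*{(D_xF_d(D_xF_d-1))^2} = \E*{(D_xF_d)^4} - 2\E*{(D_xF_d)^3} + \E*{(D_xF_d)^2}$.

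Your observation that $D_xF_d = \eta(S_x)$ with $S_x = \{y : h(x,y)=1\}$ is Poisson with mean $A(x)$ is a genuine shortcut: it replaces the Mecke-formula bookkeeping by the standard factorial-moment identity for a single Poisson variable and makes the argument entirely self-contained. The paper's approach, by contrast, reuses computations already carried out in the companion paper and is agnostic to the special $\{0,1\}$-valued structure of $h$ at the level of method (only invoking $h^2=h$ at the end), which is why it transfers verbatim from the earlier work. Either derivation of (d) is fine; yours via the factorial expansion of $(N(N-1))^2$ and the paper's via the power expansion lead to the same polynomial.
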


\begin{proof}
	Assertions (\ref*{lemma:DxF:3}), (\ref*{lemma:DxF:4}) and (\ref*{lemma:DxyF:4}) are following directly from \cite[Lemma 3]{GrygierekThaele2016} using $h^2 = h$. Additionally the proof of (\ref*{lemma:DxF:2}) is similar to the proof of (\ref*{lemma:DxF:3}) writing
	\begin{align*}
		\E{(D_xF)^2} = \dsE\sum\limits_{(y_1, y_2) \in \eta^2} h(y_1,x) h(y_2,x).
	\end{align*}
	To prove \ref*{lemma:DxF:2m}) we write
	\begin{align*}
		\E{(D_xF_d(D_xF_d-1))^2} = \E{(D_xF)^4} - 2\E{(D_xF)^3} + \E{(D_xF)^2}
	\end{align*}
	and obtain $Q(x)$ using (\ref*{lemma:DxF:2}) and (\ref*{lemma:DxF:3}) combined with $D_xF \geq 0$ and (\ref*{lemma:DxF:4}).
\end{proof}

We shall now provide the announced expressions for the terms $\gamma_1(F_d)$, $\gamma_2(F_d)$ and $\gamma_{3,P}(F_d)$.

\begin{lemma}
	We have that
	\begin{align*}
		\gamma_1(F_d) & = \int\limits_{\RR^d}\int\limits_{\RR^d} B(x_1,x_2) \rbras[\Big]{P(x_1)P(x_2)}^\frac{1}{4} \mu(\id x_1) \mu(\id x_2),\\
		\gamma_2(F_d) & = \int\limits_{\RR^d}\int\limits_{\RR^d} B(x_1,x_2) \mu(\id x_1) \mu(\id x_2),\\
		\gamma_{3,P}(F_d) & = \int\limits_{\RR^d} \rbras[\Big]{Q(x)\rbras*{A(x)^2 + A(x)}}^\frac{1}{2} \mu(\id x)\\
			& = \int\limits_{\RR^d} \rbras[\Big]{A(x)^6 + 5A(x)^5 + 6A(x)^4 + 2A(x)^3}^\frac{1}{2} \mu(\id x).
	\end{align*}
\end{lemma}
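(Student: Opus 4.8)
The plan is to substitute the moment computations from Lemma \ref{lemma:DxF} directly into the definitions of $\gamma_1(F_d)$, $\gamma_2(F_d)$, and $\gamma_{3,P}(F_d)$ given in Theorem \ref{thm:gauss-limit} and the text preceding Theorem \ref{thm:poisson-limit}, and then collapse the resulting iterated integrals using the two-parameter integrals $A(x)$ and $B(x_1,x_2)$. The key observation that makes each term tractable is that the second-order difference $D^2_{x_1,x_2}F_d$ is simply $h(x_1,x_2)$ (up to sign conventions), so by Lemma \ref{lemma:DxF}\ref{lemma:DxyF:4} we have $\E{(D^2_{x_1,x_2}F_d)^4} = h(x_1,x_2)$, and crucially $h$ is $\cbras{0,1}$-valued, so $h = h^{1/2} = h^{1/4}$ on its support. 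This idempotence is what allows the fourth-root and square-root exponents appearing in $\gamma_1$ and $\gamma_2$ to disappear and reassemble into the clean factor $B(x_1,x_2)$.

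For $\gamma_2(F_d)$ I would start from
\begin{align*}
	\gamma_2(F_d) = \int\limits_{\XX^3} \rbras*{\E*{(D^2_{x_1,x_3}F_d)^4}\E*{(D^2_{x_2,x_3}F_d)^4}}^{\frac{1}{2}} \mu^3(\id(x_1,x_2,x_3)),
\end{align*}
replace each fourth moment by $h(x_1,x_3)$ and $h(x_2,x_3)$ respectively, use $(h(x_1,x_3)h(x_2,x_3))^{1/2} = h(x_1,x_3)h(x_2,x_3)$ (again by idempotence), and integrate out $x_3$ first. The inner integral $\int h(x_1,x_3)h(x_2,x_3)\,\mu(\id x_3)$ is exactly $B(x_1,x_2)$, leaving the stated double integral. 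For $\gamma_1(F_d)$ the same reduction applies to the second-order factors, while the two fourth moments of the first-order operators contribute $(\E{(D_{x_1}F_d)^4}\E{(D_{x_2}F_d)^4})^{1/4} = (P(x_1)P(x_2))^{1/4}$ by Lemma \ref{lemma:DxF}\ref{lemma:DxF:4}; integrating out $x_3$ once more produces $B(x_1,x_2)$ and yields the claimed formula.

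For $\gamma_{3,P}(F_d)$ I would unwind the single integral
\begin{align*}
	\gamma_{3,P}(F_d) = \int\limits_{\RR^d} \rbras*{\Eabs*{D_xF_d(D_xF_d-1)}^2}^{\frac{1}{2}}\rbras*{\Eabs*{D_xF_d}^2}^{\frac{1}{2}} \mu(\id x),
\end{align*}
insert $\E{(D_xF_d(D_xF_d-1))^2} = Q(x)$ from Lemma \ref{lemma:DxF}\ref{lemma:DxF:2m} and $\E{(D_xF_d)^2} = A(x)^2 + A(x)$ from part \ref{lemma:DxF:2}, giving the factored form $(Q(x)(A(x)^2+A(x)))^{1/2}$. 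The second equality is then a purely algebraic expansion: multiplying $Q(x) = A(x)^4 + 4A(x)^3 + 2A(x)^2$ by $A(x)^2 + A(x)$ and collecting powers of $A(x)$ should produce $A(x)^6 + 5A(x)^5 + 6A(x)^4 + 2A(x)^3$ under the square root. I expect no genuine obstacle here — the content is entirely bookkeeping — and the only point demanding care is the interplay between the sign/indexing convention for $D^2 F_d$ and the use of $h^2 = h$, ensuring that the exponents really do collapse so that the $x_3$-integration delivers $B(x_1,x_2)$ rather than some fractional power of it. I would verify the polynomial identity for $\gamma_{3,P}$ explicitly as a sanity check, since that is the one place an arithmetic slip could hide.
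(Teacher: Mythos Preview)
Your proposal is correct and follows essentially the same approach as the paper: for $\gamma_1(F_d)$ and $\gamma_2(F_d)$ the paper simply cites the analogous computation in \cite[Lemma 4]{GrygierekThaele2016} (which is precisely your argument via $\E{(D^2_{x_1,x_2}F_d)^4}=h(x_1,x_2)$, idempotence of $h$, and integrating out $x_3$ to form $B(x_1,x_2)$), and for $\gamma_{3,P}(F_d)$ the paper does exactly what you describe, inserting Lemma \ref{lemma:DxF}(\ref{lemma:DxF:2}) and (\ref{lemma:DxF:2m}) into the defining integral. Your explicit verification of the polynomial expansion $Q(x)(A(x)^2+A(x))=A(x)^6+5A(x)^5+6A(x)^4+2A(x)^3$ is a step the paper leaves implicit.
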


\begin{proof}
	The expression for $\gamma_1(F_d)$ and $\gamma_2(F_d)$ are following similar to \cite[Lemma 4]{GrygierekThaele2016} by replacing the standardized $U$-statistics $\widetilde{F_d}$ with the non-standardized $F_d$. 
	Using Lemma \ref{lemma:DxF} (\ref*{lemma:DxF:2}) and (\ref*{lemma:DxF:2m}) we have that
	\begin{align*}
		\gamma_{3,P}(F) & = \int\limits_{\RR^d} \rbras*{\Eabs*{D_xF(D_xF-1)}^2}^\frac{1}{2} \rbras*{\Eabs*{D_xF}^2}^\frac{1}{2} \mu(\id x)\\
			& = \int\limits_{\RR^d} \rbras*{Q(x)}^\frac{1}{2} \rbras*{A(x)^2+A(x)}^\frac{1}{2} \mu(\id x)\\
			& = \int\limits_{\RR^d} \rbras[\Big]{Q(x)\rbras*{A(x)^2 + A(x)}}^\frac{1}{2} \mu(\id x),
	\end{align*}
	and the proof is complete.
\end{proof}

Now we can combine these expressions established so far to reformulate Theorem \ref{thm:poisson-limit} for our second-order $U$-statistic $F_d$.

\begin{proposition}\label{prop:general-bound:u-stat}
	Let $\eta$ be a Poisson point process on $\XX$ with $\sigma$-finite non-atomic intensity measure $\mu$ and let $F_d := \frac{1}{2} \sum_{(y_1,y_2) \in \eta^2_{\neq}} h(y_1,y_2)$ be a second-order $U$-statistic with symmetric kernel $h:\RR^d \times \RR^d \rightarrow \cbras*{0,1}$. Suppose that $\dsE \int_{\RR^d} (D_xF_d)^2 \mu(\id x) < \infty$. Defining
	\begin{align*}
		\gamma_1(F_d) & = \int\limits_{\RR^d}\int\limits_{\RR^d} B(x_1,x_2) \rbras[\Big]{P(x_1)P(x_2)}^\frac{1}{4} \mu(\id x_1) \mu(\id x_2),\\
		\gamma_2(F_d) & = \int\limits_{\RR^d}\int\limits_{\RR^d} B(x_1,x_2) \mu(\id x_1)\mu(\id x_2),\\
		\gamma_{3,P}(F_d) & = \int\limits_{\RR^d} \rbras[\Big]{Q(x)\rbras*{A(x)^2 + A(x)}}^\frac{1}{2} \mu(\id x),
	\end{align*}
	one has that
	\begin{align*}
		\dist_{TV}\rbras*{F,\cP(\theta)} \leq \frac{1-e^{-\theta}}{\theta} \rbras*{2\sqrt{\gamma_1(F_d)} + \sqrt{\gamma_2(F_d)} + \frac{\gamma_{3,P}(F_d)}{\theta} + \abs*{\E*{F} - \theta} + \abs*{\V*{F} - \theta}},
	\end{align*}
	where $\cP(\theta)$ is a Poisson distributed random variable with parameter $\theta > 0$.
\end{proposition}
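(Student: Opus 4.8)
The plan is to recognize that this Proposition is nothing more than a specialization of the general Poisson approximation bound, Theorem \ref{thm:poisson-limit}, to the second-order $U$-statistic $F_d$. Consequently the whole argument reduces to two routine steps: verifying that $F_d$ meets the hypotheses of that theorem, and then substituting the explicit closed forms for the three $\gamma$-terms that have just been computed.

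First I would confirm the structural assumptions. Since $h:\RR^d \times \RR^d \to \cbras*{0,1}$ is symmetric, every unordered pair $\cbras*{y_1,y_2}$ is counted twice in the sum over $\eta^2_{\neq}$ (once as $(y_1,y_2)$ and once as $(y_2,y_1)$), so the prefactor $\frac{1}{2}$ makes $F_d$ equal to the number of distinct pairs $\cbras*{y_1,y_2} \subseteq \eta$ with $h(y_1,y_2)=1$. In particular $F_d$ is $\NNN$-valued, as Theorem \ref{thm:poisson-limit} requires. Next I would establish $F_d \in \dom{D}$: by the standing integrability hypothesis $\E*{\int_{\RR^d}(D_xF_d)^2\,\mu(\id x)} < \infty$, the criterion of \cite[Lemma 3.1]{PeccatiThaele2013} applies verbatim and places $F_d$ in the domain of the Malliavin derivative.

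With the hypotheses in hand, I would invoke Theorem \ref{thm:poisson-limit} with $F = F_d$ and the given parameter $\theta > 0$, obtaining
\begin{align*}
    \dist_{TV}\rbras*{F_d,\cP(\theta)} \leq \frac{1-e^{-\theta}}{\theta} \rbras*{2\sqrt{\gamma_1(F_d)} + \sqrt{\gamma_2(F_d)} + \frac{\gamma_{3,P}(F_d)}{\theta} + \abs*{\E*{F_d} - \theta} + \abs*{\V*{F_d} - \theta}}.
\end{align*}
It then only remains to rewrite the abstract functionals in closed form. For this I would quote the immediately preceding Lemma, which expresses $\gamma_1(F_d)$, $\gamma_2(F_d)$ and $\gamma_{3,P}(F_d)$ through the integrals $A$, $B$ and the polynomials $P$, $Q$; these expressions in turn rest on the moment identities of Lemma \ref{lemma:DxF}. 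Inserting them yields precisely the asserted inequality.

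I do not anticipate a genuine obstacle here, since all analytic content already resides in Theorem \ref{thm:poisson-limit} and in the moment computations of Lemma \ref{lemma:DxF} and its corollary; the present statement merely assembles those ingredients. If any point deserves care, it is the verification that the $\frac{1}{2}$-prefactor together with the symmetry of $h$ really renders $F_d$ integer- rather than half-integer-valued, because the $\NNN$-valued property is indispensable both for the total-variation framework and for Theorem \ref{thm:poisson-limit:malliavin-bound}, upon which Theorem \ref{thm:poisson-limit} is built.
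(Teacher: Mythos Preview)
Your proposal is correct and aligns with the paper's own treatment: the paper does not write out a separate proof for this proposition but presents it explicitly as a reformulation of Theorem \ref{thm:poisson-limit} using the closed-form expressions for $\gamma_1$, $\gamma_2$, $\gamma_{3,P}$ derived in the preceding lemma (which in turn rest on Lemma \ref{lemma:DxF}). Your added verifications that $F_d$ is $\NNN$-valued and that the integrability hypothesis yields $F_d \in \dom{D}$ via \cite[Lemma 3.1]{PeccatiThaele2013} are exactly the points the paper leaves implicit, so you have simply made the assembly explicit.
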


\section{Proof of Theorem \ref{thm:edge-count-poisson-limit}}\label{sec:proof:edge-count-poisson-limit}

Let us recall that $\eta_d$ denotes a stationary Poisson point process on $\RR^d$ with intensity $\lambda_d$ given by \eqref{eq:phase:2}. 
We denote by $\mu$ the intensity measure of $\eta_d$, that is, $\mu$ is $\lambda_d$ times the Lebesgue measure on $\RR^d$. 
Moreover, from now on we will assume without loss of generality that all the random variables $(\cE_d)_{d \geq 2}$ are defined on a common probability space $(\Omega,\sF,\dsP)$.

It easy to see, that the edge counting statistic $\cE_d$ is a second-order $U$-statistic with measurable, symmetric and $d$-dependent kernel $h:\RR^d \times \RR^d \rightarrow \cbras*{0,1}$, given by
\begin{align}\label{eq:edge-count:kernel}
	h(x,y) := \1\cbras*{\norm{x-y} \leq \delta_d, \frac{x+y}{2} \in \BB^d}.
\end{align}
To derive Theorem \ref{thm:edge-count-poisson-limit} we apply the Poisson approximation bound derived in Proposition \ref{prop:general-bound:u-stat} using the bounds on the parameter integrals and the expectation and variance of $\cE_d$ given by \cite[eq. 15, Lemma 6, Lemma 7]{GrygierekThaele2016}.
We have
\begin{align}\label{eq:edge-count:expectation}
	\E*{\cE_d} = \frac{1}{2} \int\limits_{\RR^d} A(x) \lambda(\id x) = \frac{1}{2} \kappa_d^2 \lambda_d^2 \delta_d^{2d},
\end{align}
and
\begin{align}\label{eq:edge-count:variance}
	\frac{1}{2} \kappa_d^2 \lambda_d^2 \delta_d^d + \rbras*{1-\frac{\delta_d}{2}}^d \kappa_d^3 \lambda_d^3 \delta_d^{2d} \leq \V*{\cE_d} \leq \frac{1}{2} \kappa_d^2 \lambda_d^2 \delta_d^d + \rbras*{1+\frac{\delta_d}{2}}^d \kappa_d^3 \lambda_d^3 \delta_d^{2d}.
\end{align}
\begin{lemma}
	Let $h:\RR^d \times \RR^d \rightarrow \cbras*{0,1}$ be the function given by \eqref{eq:edge-count:kernel}. Then for all $x \in \RR^d$ it holds that
	\begin{align}\label{eq:edge-count:integral-bound}
		\1\cbras*{x \in \BB^d_{1-\frac{\delta_d}{2}}(0)} \kappa_d \lambda_d \delta_d^d \leq A(x) \leq \1\cbras*{x \in \BB^d_{1+\frac{\delta_d}{2}}(0)} \kappa_d \lambda_d \delta_d^d.
	\end{align}
\end{lemma}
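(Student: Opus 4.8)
The plan is to evaluate $A(x)$ directly and bound it by elementary geometry. Since the intensity measure is $\mu = \lambda_d \Lambda_d$, I would first rewrite
\[
	A(x) = \int_{\RR^d} \1\cbras*{\norm{x-y} \leq \delta_d, \ \norm{\frac{x+y}{2}} \leq 1} \lambda_d \Lambda_d(\id y) = \lambda_d \Lambda_d(R_x),
\]
where $R_x := \cbras*{y \in \RR^d : \norm{x-y} \leq \delta_d, \ \norm{\frac{x+y}{2}} \leq 1}$ collects the admissible partners of $x$. The natural next step is the translation $z = y - x$: the two constraints become $\norm{z} \leq \delta_d$ and $\norm{x + \frac{z}{2}} \leq 1$, and by translation invariance of $\Lambda_d$ the volume $\Lambda_d(R_x)$ equals the volume of $S_x := \cbras*{z \in \RR^d : \norm{z} \leq \delta_d, \ \norm{x + \frac{z}{2}} \leq 1}$.

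For the upper bound I would simply discard the midpoint constraint, so that $S_x$ is contained in the ball $\cbras*{z : \norm{z} \leq \delta_d}$ of volume $\kappa_d \delta_d^d$; this already gives $A(x) \leq \kappa_d \lambda_d \delta_d^d$ for every $x$. To produce the indicator in front, I would apply the triangle inequality in reverse: any $z \in S_x$ satisfies $\norm{x} \leq \norm{x + \frac{z}{2}} + \frac{1}{2}\norm{z} \leq 1 + \frac{\delta_d}{2}$, hence $S_x = \emptyset$ whenever $\norm{x} > 1 + \frac{\delta_d}{2}$ and $A(x) = 0$ outside $\BB^d_{1 + \frac{\delta_d}{2}}(0)$, which is exactly the asserted upper estimate.

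For the lower bound the key observation is that for $x$ deep enough inside the ball the midpoint constraint is vacuous. Indeed, if $\norm{x} \leq 1 - \frac{\delta_d}{2}$, then for every $z$ with $\norm{z} \leq \delta_d$ the triangle inequality gives $\norm{x + \frac{z}{2}} \leq \norm{x} + \frac{1}{2}\norm{z} \leq (1 - \frac{\delta_d}{2}) + \frac{\delta_d}{2} = 1$. Thus the second constraint is automatically satisfied, $S_x = \cbras*{z : \norm{z} \leq \delta_d}$, and $A(x) = \kappa_d \lambda_d \delta_d^d$ exactly on $\BB^d_{1 - \frac{\delta_d}{2}}(0)$; since $A(x) \geq 0$ everywhere, this is precisely the claimed lower bound.

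I do not expect any real obstacle: the statement reduces to the two directions of the triangle inequality for $\norm{x + \frac{z}{2}}$ together with translation invariance and the scaling $\Lambda_d(\BB^d_{\delta_d}(0)) = \kappa_d \delta_d^d$. The only point requiring a little care is to keep the two inequalities in the correct direction — using $\norm{x} \leq \norm{x + \frac{z}{2}} + \frac{1}{2}\norm{z}$ to locate the support for the upper bound, and $\norm{x + \frac{z}{2}} \leq \norm{x} + \frac{1}{2}\norm{z}$ to see the redundancy of the midpoint constraint for the lower bound.
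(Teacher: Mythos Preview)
Your argument is correct: the change of variables $z=y-x$ together with the two directions of the triangle inequality for $\norm{x+\tfrac{z}{2}}$ gives exactly the stated bounds, and nothing more is needed. The paper itself does not spell out a proof of this lemma but simply imports it from \cite[Lemma~6]{GrygierekThaele2016}; the argument there is the same elementary geometric computation you describe, so your approach coincides with the intended one.
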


\begin{remark}[cf. {\cite[Remark 8]{GrygierekThaele2016}}]\label{rem:choice-of-delta}
	Our particular choice $\delta_d = \frac{1}{d}$ ensures that we can find absolute constants $\bC_1, \bC_2 \in (0,\infty)$ and $\bD \in \NNN$ such that
	\begin{align}\label{eq:cond:delta:low}
		0 < \bC_1  \leq (1 - \frac{\delta_d}{2})^d,
	\end{align}
	and
	\begin{align}\label{eq:cond:delta:high}
		(1+\frac{\delta_d}{2})^d \leq \bC_2 < \infty,
	\end{align}
	for all $d \geq \bD$. The existence of such constants is important to derive the final bounds on the right hand side of our main result and implies restrictions to more general choices of $\delta_d$, see the proof of Lemma \ref{lem:edge-count:gamma-bounds}.
	If one is only interested in the Poisson limit, the first condition \eqref{eq:cond:delta:low} can be omitted, since it is only involved in the lower variance bound used in the Gaussian approximations, see \cite[Lemma 11 and eq. 20]{GrygierekThaele2016}.
\end{remark}

In the next step, we check the integrability condition in Proposition \ref{prop:general-bound:u-stat}. Note that this condition determines the limiting distribution, yielding the Gaussian limit if \eqref{eq:phase:1} holds resp. the Poisson limit if \eqref{eq:phase:2} holds:

\begin{lemma}
	If \eqref{eq:phase:1} holds, we have
	\begin{align*}
		\frac{1}{\V*{\cE_d}} \dsE \int\limits_{\RR^d}(D_x\cE_d)^2 \mu(\id x) = 1 + \frac{\E*{\cE_d}}{\V*{\cE_d}} < \infty,
	\end{align*}
	and \cite[Theorem 1]{GrygierekThaele2016} yields the Gaussian limit for the standardized edge counting statistics $(\V*{\cE_d})^{-\frac{1}{2}}(\cE_d - \E*{\cE_d})$.
	
	If \eqref{eq:phase:2} holds, we have
	\begin{align*}
		\dsE \int\limits_{\RR^d}(D_x\cE_d)^2 \mu(\id x) = \V*{\cE_d} + \E*{\cE_d} < \infty,
	\end{align*}
	thus we can apply the Poisson approximation given by Proposition \ref{prop:general-bound:u-stat}.
\end{lemma}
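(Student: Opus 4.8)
The plan is to reduce the stochastic integral $\dsE \int_{\RR^d}(D_x\cE_d)^2 \mu(\id x)$ to a purely deterministic quantity expressed through the parameter integral $A$, and then to recognize the resulting pieces as the already-computed expectation and variance of $\cE_d$. First I would invoke Tonelli's theorem to interchange expectation and integration — this is legitimate because $(D_x\cE_d)^2 \geq 0$ — and then apply the pointwise second-moment identity $\E*{(D_x\cE_d)^2} = A(x)^2 + A(x)$ from Lemma \ref{lemma:DxF}. This yields
\begin{align*}
	\dsE \int\limits_{\RR^d}(D_x\cE_d)^2 \mu(\id x) = \int\limits_{\RR^d} A(x)^2 \mu(\id x) + \int\limits_{\RR^d} A(x)\, \mu(\id x).
\end{align*}

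Next I would identify the two summands on the right. Unfolding $A(x) = \int_{\RR^d} h(x,y)\mu(\id y)$ and comparing with \eqref{eq:u-stat:expectation} (equivalently \eqref{eq:edge-count:expectation}) gives $\int_{\RR^d} A(x)\,\mu(\id x) = \int_{\RR^d}\int_{\RR^d} h(x,y)\mu(\id y)\mu(\id x) = 2\E*{\cE_d}$. For the quadratic term I would read off the variance formula \eqref{eq:u-stat:variance}: since $\int_{\RR^d} h(x_1,x_2)\mu(\id x_2) = A(x_1)$, its first summand is exactly $\int_{\RR^d} A(x)^2\mu(\id x)$, so that $\int_{\RR^d} A(x)^2\mu(\id x) = \V*{\cE_d} - \E*{\cE_d}$.

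Substituting both identities gives
\begin{align*}
	\dsE \int\limits_{\RR^d}(D_x\cE_d)^2 \mu(\id x) = \rbras*{\V*{\cE_d} - \E*{\cE_d}} + 2\E*{\cE_d} = \V*{\cE_d} + \E*{\cE_d},
\end{align*}
which is the claim under \eqref{eq:phase:2}; dividing through by $\V*{\cE_d}$ produces the normalized identity $1 + \E*{\cE_d}/\V*{\cE_d}$ claimed under \eqref{eq:phase:1}. Finiteness is then immediate, because for every fixed $d$ the explicit expressions \eqref{eq:edge-count:expectation} and \eqref{eq:edge-count:variance} show that both $\E*{\cE_d}$ and $\V*{\cE_d}$ are finite (and $\V*{\cE_d}>0$ by the lower variance bound), so that both the sum and the ratio above are finite.

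The computation is elementary, so I do not anticipate a genuine obstacle; the two points requiring care are the bookkeeping of the factor $\tfrac12$ in the expectation formula (so that $\int A\,\mu = 2\E*{\cE_d}$, not $\E*{\cE_d}$) and the correct matching of the first summand of \eqref{eq:u-stat:variance} with $\int A^2\,\mu$. A secondary point worth making explicit is the justification of the Tonelli interchange, which rests solely on nonnegativity of $(D_x\cE_d)^2$ together with the finiteness of the resulting expression.
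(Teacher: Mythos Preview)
Your proof is correct and follows essentially the same route as the paper: both rest on the identity $\dsE\int_{\RR^d}(D_x\cE_d)^2\,\mu(\id x) = \V*{\cE_d} + \E*{\cE_d}$, which you derive explicitly from Lemma~\ref{lemma:DxF}(\ref{lemma:DxF:2}) and \eqref{eq:u-stat:variance}, while the paper cites \cite[Lemma 9]{GrygierekThaele2016} for the first claim and states the identity directly for the second. The only noteworthy difference is that, in arguing finiteness under \eqref{eq:phase:2}, the paper goes a step further and uses \eqref{eq:edge-count:variance} together with Remark~\ref{rem:choice-of-delta} to show $\V*{\cE_d} \to \theta$ --- a fact needed later in the proof of Theorem~\ref{thm:edge-count-poisson-limit} --- whereas your argument stops at the (sufficient) pointwise-in-$d$ finiteness.
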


\begin{proof}
	The first claim was already shown by \cite[Lemma 9]{GrygierekThaele2016}.
	For the second claim, note that
	\begin{align*}
		\dsE \int\limits_{\RR^d}(D_x\cE_d)^2 \mu(\id x) = \V*{\cE_d} + \E*{\cE_d}.
	\end{align*}
	Using \eqref{eq:edge-count:expectation} and \eqref{eq:edge-count:variance} we obtain
	\begin{align*}
		\V*{\cE_d} + \E*{\cE_d} \leq \kappa_d^2 \lambda_d^2 \delta_d^d + \rbras*{1+\frac{\delta_d}{2}}^d \kappa_d^3 \lambda_d^3 \delta_d^{2d}.
	\end{align*}
	Assumption \eqref{eq:phase:2}, $\E*{\cE_d} = \frac{1}{2}\kappa_d^2 \lambda_d^2 \delta_d^d \rightarrow \theta$, implies that $\kappa_d^3 \lambda_d^3 \delta_d^{2d} \rightarrow 0$. 
	The choice of $\delta_d$ according to Remark \ref{rem:choice-of-delta} ensures that $\rbras*{1+\frac{\delta_d}{2}}^d$ can be bounded. 
	Thus $\V*{\cE_d} \rightarrow \theta$ and further $\V*{\cE_d} + \E*{\cE_d} < \infty$.
\end{proof}

Now, we will use the bounds for the parameter integral $A(x)$ to derive an upper bound for the three terms appearing in Proposition \ref{prop:general-bound:u-stat}.

\begin{lemma}\label{lem:edge-count:gamma-bounds}
	There are absolute constants $\bC_1, \bC_2, \bC_3 \in (0,\infty)$ and $\bD \in \NNN$ such that
	\begin{align*}
		\gamma_1(\cE_d) & \leq \bC_1 (\kappa_d\lambda_d\delta_d^d)^\frac{3}{2},\\
		\gamma_2(\cE_d) & \leq \bC_2 (\kappa_d\lambda_d\delta_d^d),\\
		\gamma_{3,P}(\cE_d) & \leq \bC_3 (\kappa_d\lambda_d\delta_d^d)^\frac{1}{2},
	\end{align*}
	for all $d \geq \bD$.
\end{lemma}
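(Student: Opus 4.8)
The plan is to feed the closed-form expressions for $\gamma_1(\cE_d)$, $\gamma_2(\cE_d)$ and $\gamma_{3,P}(\cE_d)$ from the preceding lemma into the pointwise bound \eqref{eq:edge-count:integral-bound} on $A(x)$, and to reduce every power of $\kappa_d\lambda_d\delta_d^d$ by repeatedly extracting the quantity $\kappa_d^2\lambda_d^2\delta_d^d$, which is bounded because it converges to $2\theta$ under \eqref{eq:phase:2}. Write $a_d := \kappa_d\lambda_d\delta_d^d$. Two facts drive the whole argument. First, $a_d^2 = (\kappa_d^2\lambda_d^2\delta_d^d)\,\delta_d^d$, so $a_d \to 0$ since the first factor converges to $2\theta$ while $\delta_d^d = d^{-d}\to 0$; hence there is an absolute $\bD$ with $A(x) \leq a_d \leq 1$ for all $x$ and all $d \geq \bD$. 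Second, by \eqref{eq:phase:2} and Remark \ref{rem:choice-of-delta} the two factors $\kappa_d^2\lambda_d^2\delta_d^d$ and $(1+\frac{\delta_d}{2})^d$ are each bounded by an absolute constant for $d \geq \bD$; these are the only two quantities I need to control.

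First I would treat $\gamma_2(\cE_d)$. Using the symmetry of $h$ and Tonelli's theorem, the double integral collapses to $\int_{\RR^d} A(y)^2\,\mu(\id y)$, and \eqref{eq:edge-count:integral-bound} gives $\int_{\RR^d} A(y)^2\,\mu(\id y) \leq a_d^2\,\lambda_d\kappa_d(1+\frac{\delta_d}{2})^d$. Rewriting $a_d^2\lambda_d\kappa_d = a_d\,(\kappa_d^2\lambda_d^2\delta_d^d)$ and invoking the two boundedness facts yields $\gamma_2(\cE_d) \leq \bC_2\, a_d$. For $\gamma_{3,P}(\cE_d)$ I would use $A(x)\leq 1$ to dominate the integrand polynomial by its lowest-order term, $A(x)^6 + 5A(x)^5 + 6A(x)^4 + 2A(x)^3 \leq 14\,A(x)^3$, so that $\gamma_{3,P}(\cE_d) \leq \sqrt{14}\int_{\RR^d} A(x)^{3/2}\,\mu(\id x) \leq \sqrt{14}\,a_d^{3/2}\lambda_d\kappa_d(1+\frac{\delta_d}{2})^d$; the same rewriting $a_d^{3/2}\lambda_d\kappa_d = a_d^{1/2}(\kappa_d^2\lambda_d^2\delta_d^d)$ gives $\gamma_{3,P}(\cE_d)\leq \bC_3\,a_d^{1/2}$.

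Finally, for $\gamma_1(\cE_d)$ I would not integrate from scratch but reduce to $\gamma_2$. Since $A(x)\leq 1$, the quartic satisfies $P(x) = A(x)^4 + 6A(x)^3 + 7A(x)^2 + A(x) \leq 15\,A(x)$, whence $(P(x_1)P(x_2))^{1/4} \leq \sqrt{15}\,(A(x_1)A(x_2))^{1/4} \leq \sqrt{15}\,a_d^{1/2}$ by \eqref{eq:edge-count:integral-bound}. Pulling this constant factor out of the double integral leaves exactly $\gamma_2(\cE_d)$, so $\gamma_1(\cE_d) \leq \sqrt{15}\,a_d^{1/2}\,\gamma_2(\cE_d) \leq \bC_1\,a_d^{3/2}$. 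The only genuinely delicate step is the bookkeeping of the first paragraph: one must resist bounding $\lambda_d\kappa_d$ on its own, as it diverges, and instead always pair it with one surplus factor $\delta_d^d$ to form the convergent combination $\kappa_d^2\lambda_d^2\delta_d^d$, so that each $\gamma$ emerges as a bounded absolute constant times the correct power of $a_d = \kappa_d\lambda_d\delta_d^d$.
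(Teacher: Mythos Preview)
Your proposal is correct and follows essentially the same route as the paper's proof: you use the pointwise bound \eqref{eq:edge-count:integral-bound} on $A(x)$, collapse $\int\!\!\int B\,\id\mu^2$ to $\int A^2\,\id\mu$ via Fubini, and then absorb the divergent factor $\kappa_d\lambda_d$ by pairing it with a surplus $\delta_d^d$ to form the bounded quantity $\kappa_d^2\lambda_d^2\delta_d^d$ together with the bounded $(1+\tfrac{\delta_d}{2})^d$. Your explicit use of $A(x)\leq a_d\leq 1$ to truncate the polynomials $P$ and $Q$ to their lowest-order terms is exactly the paper's ``the speed of convergence is dominated by the term with the lowest exponent'' made precise, and your reduction $\gamma_1\leq \sqrt{15}\,a_d^{1/2}\gamma_2$ is a slightly tidier packaging of the same computation the paper carries out for $\gamma_1$ directly.
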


\begin{proof}
	Applying \eqref{eq:edge-count:integral-bound} to the definition of $P(x)$ in Lemma \ref{lemma:DxF} we see that
	\begin{align*}
		P(x) & \leq \1\cbras*{x \in \BB^d_{1+\frac{\delta_d}{2}}(0)} \sbras*{(\kappa_d\lambda_d\delta_d^d)^4+6(\kappa_d\lambda_d\delta_d^d)^3+7(\kappa_d\lambda_d\delta_d^d)^2+(\kappa_d\lambda_d\delta_d^d)^1}\\
			& \leq (\kappa_d\lambda_d\delta_d^d)^4+6(\kappa_d\lambda_d\delta_d^d)^3+7(\kappa_d\lambda_d\delta_d^d)^2+(\kappa_d\lambda_d\delta_d^d)^1.
	\end{align*}
	Therefore, it follows that
	\begin{align*}
		\gamma_1(\cE_d) & \leq \sbras*{(\kappa_d\lambda_d\delta_d^d)^4+6(\kappa_d\lambda_d\delta_d^d)^3+7(\kappa_d\lambda_d\delta_d^d)^2+(\kappa_d\lambda_d\delta_d^d)^1}^\frac{1}{2}\\
		& \quad \times \int\limits_{\RR^d}\int\limits_{\RR^d} B(x_1,x_2) \mu(\id x_1)\mu(\id x_2).
	\end{align*}
	We now use Fubini's theorem to re-write the double integral. Together with \eqref{eq:edge-count:integral-bound} this implies
	\begin{align*}
		\gamma_1(\cE_d) & \leq \sbras*{(\kappa_d\lambda_d\delta_d^d)^4+6(\kappa_d\lambda_d\delta_d^d)^3+7(\kappa_d\lambda_d\delta_d^d)^2+(\kappa_d\lambda_d\delta_d^d)^1}^\frac{1}{2} \int\limits_{\RR^d} A(y)^2 \mu(\id y)\\
			& \leq \kappa_d^3 \lambda_d^3 \delta_d^{2d} \rbras*{1+\frac{\delta_d}{2}}^d \sbras*{(\kappa_d\lambda_d\delta_d^d)^4+6(\kappa_d\lambda_d\delta_d^d)^3+7(\kappa_d\lambda_d\delta_d^d)^2+(\kappa_d\lambda_d\delta_d^d)^1}^\frac{1}{2}.
	\end{align*}
	Note that \eqref{eq:phase:2}, $\E*{\cE_d} = \frac{1}{2}\kappa_d^2 \lambda_d^2 \delta_d^d \rightarrow \theta$, implies $\kappa_d \lambda_d \delta_d^d \rightarrow 0$, thus the speed of convergence is dominated by the term with the lowest exponent.
	Additionally $\kappa_d^2\lambda_d^2\delta_d^d \rightarrow \theta$ and Remark \ref{rem:choice-of-delta} imply, that we can bound $(1+\frac{\delta_d}{2})^d$ and $\kappa_d^2\lambda_d^2\delta_d^d$ by absolute constants for $d$ sufficiently large.
	Thus there are absolute constants $\tilde{\bC}_1, \hat{\bC}_1, \bC_1 \in (0,\infty)$ and $\bD_1 \in \NNN$ such that
	\begin{align*}
		\gamma_1(\cE_d) & \leq \tilde{\bC}_1 \kappa_d^3 \lambda_d^3 \delta_d^{2d} \rbras*{1+\frac{\delta_d}{2}}^d (\kappa_d \lambda_d \delta_d^d)^\frac{1}{2}\\
			& \leq \hat{\bC}_1 (\kappa_d^2\lambda_d^2\delta_d^d)\rbras*{1+\frac{\delta_d}{2}}^d (\kappa_d \lambda_d \delta_d^d)^\frac{3}{2}\\
			& \leq \bC_1 (\kappa_d\lambda_d\delta_d^d)^\frac{3}{2},
	\end{align*}
	for all $d \geq \bD_1$.
	Using \eqref{eq:edge-count:integral-bound} we obtain in a similar way that
	\begin{align*}
		\gamma_2(\cE_d) \leq \int\limits_{\RR^d} A(y)^2 \mu(\id y) \leq \kappa_d^3 \lambda_d^3 \delta_d^{2d} \rbras*{1+\frac{\delta_d}{2}}^d \leq \bC_2 \kappa_d \lambda_d \delta_d^d,
	\end{align*}
	for all $d \geq \bD_2$, where $\bC_2 \in (0,\infty)$ and $\bD_2 \in \NNN$ are absolute constants.
	Applying \eqref{eq:edge-count:integral-bound} to the definition of $Q(x)$ in Lemma \ref{lemma:DxF} we see that
	\begin{align*}
		Q(x) & \leq \1\cbras*{x \in \BB^d_{1+\frac{\delta_d}{2}}(0)}\sbras*{(\kappa_d\lambda_d\delta^d)^4 + 4(\kappa_d\lambda_d\delta^d)^3 + 2(\kappa_d\lambda_d\delta^d)^2}\\
			& \leq (\kappa_d\lambda_d\delta^d)^4 + 4(\kappa_d\lambda_d\delta^d)^3 + 2(\kappa_d\lambda_d\delta^d)^2
	\end{align*}
	and
	\begin{align*}
		A(x)^2 + A(x) \leq \1\cbras*{x \in \BB^d_{1+\frac{\delta_d}{2}}(0)}\sbras*{(\kappa_d\lambda_d\delta^d)^2 + (\kappa_d\lambda_d\delta^d)}.
	\end{align*}
	Therefore, it follows that
	\begin{align*}
		\gamma_{3,P}(\cE_d) & \leq \sbras*{(\kappa_d\lambda_d\delta^d)^4 + 4(\kappa_d\lambda_d\delta^d)^3 + 2(\kappa_d\lambda_d\delta^d)^2}^\frac{1}{2}\int\limits_{\RR^d} \rbras*{A(x)^2 + A(x)}^\frac{1}{2} \mu(\id x)\\
			& \leq \sbras*{(\kappa_d\lambda_d\delta^d)^4 + 4(\kappa_d\lambda_d\delta^d)^3 + 2(\kappa_d\lambda_d\delta^d)^2}^\frac{1}{2} \sbras*{(\kappa_d\lambda_d\delta^d)^2 + (\kappa_d\lambda_d\delta^d)}^\frac{1}{2}\\
			& \quad \times \rbras*{1+\frac{\delta_d}{2}}^d \kappa_d \lambda_d\\
			& \leq \tilde{\bC}_3 (\kappa_d\lambda_d\delta^d)^\frac{3}{2} \kappa_d \lambda_d
			\leq \hat{\bC}_3 \sbras*{(\kappa_d^2\lambda_d^2\delta^d)^2 (\kappa_d\lambda_d\delta_d^d)}^\frac{1}{2}
			\leq \bC_3 (\kappa_d\lambda_d\delta^d)^\frac{1}{2}.
	\end{align*}
	for all $d \geq \bD_3$, where $\tilde{\bC}_3, \hat{\bC}_3, \bC_3 \in (0,\infty)$ and $\bD_3 \in \NNN$ are absolute constants.
	Setting $D := \max\cbras*{\bD_1,\bD_2,\bD_3}$ completes the proof.
\end{proof}

After these preparations, we can now present the proof of our second main result.
\begin{proof}[Proof of Theorem \ref{thm:edge-count-poisson-limit}]
	We use Proposition \ref{prop:general-bound:u-stat} and the results of the last lemma.
	Assuming \eqref{eq:phase:2} we find absolute constants $\bC_1, \bC_2, \bC_3, \bC_4, \bC_5 \in (0,\infty)$ and $\bD \in \NNN$ such that
	\begin{align*}
		\dist_{TV}\rbras*{\cE,\cP(\theta)} 
			& \leq \frac{1-e^{-\theta}}{\theta} 
			\left(
				2\sqrt{\bC_1}(\kappa_d\lambda_d\delta_d^d)^\frac{3}{4} + \sqrt{\bC_2}(\kappa_d\lambda_d\delta_d^d)^\frac{1}{2} + \frac{\bC_3}{\theta}(\kappa_d\lambda_d\delta_d^d)^\frac{1}{2} \right. \\
			& \left. + \bC_4\abs*{\frac{1}{2}\kappa_d^2 \lambda_d^2 \delta_d^d - \theta} + \bC_5\abs*{\frac{1}{2}\kappa_d^d\lambda_d^2 \delta^d + \kappa_d^3 \lambda_d^3 \delta_d^{2d}- \theta }\right)\\
			& \leq \frac{1-e^{-\theta}}{\theta} 
			\left( 2\sqrt{\bC_1}(\kappa_d\lambda_d\delta_d^d)^\frac{3}{4} + (\sqrt{\bC_2} + \frac{\bC_3}{\theta})(\kappa_d\lambda_d\delta_d^d)^\frac{1}{2} \right. \\
			& \left. + (\bC_4+\bC_5) \abs*{\frac{1}{2} \kappa_d^2 \lambda_d^2 \delta_d^d - \theta} + \bC_5 (\kappa_d^3 \lambda_d^3 \delta_d^{2d}) \right).
	\end{align*}
	holds for all $d \geq \bD$.
	Since $\kappa_d \lambda_d \delta_d^d \rightarrow 0$ the first and the last term are converging faster to zero than the second term, thus we can find absolute constants $\widetilde{\bC}_1, \widetilde{\bC}_2 \in (0,\infty)$ and $\widetilde{\bD}$ such that
	\begin{align*}
		\dist_{TV}\rbras*{\cE_d,\cP(\theta)} 
			& \leq \frac{1-e^{-\theta}}{\theta} \left( \widetilde{\bC}_1 (\kappa_d\lambda_d\delta_d)^\frac{1}{2} + \widetilde{\bC}_2 \abs*{\frac{1}{2} \kappa_d^2 \lambda_d^2 \delta_d^d - \theta }\right)
	\end{align*}
	holds for all $d \geq \widetilde{\bD}$.
	Using our assumption \eqref{eq:phase:2} it follows that $\dist_{TV}\rbras*{\cE_d,\cP(\theta)} \rightarrow 0$ and hence $\cE \os{D}{\longrightarrow} \cP(\theta)$ as $d \rightarrow \infty$.
	This completes the proof of Theorem \ref{thm:edge-count-poisson-limit}.
\end{proof}

\bigskip

\printbibliography

\listoffixmes

\end{document}